\def\bbm[#1]{\mbox{\boldmath $#1$}}
\newtheorem{thm}{Theorem}[section]
\newtheorem{lemma}[thm]{Lemma}
\newtheorem{prop}[thm]{Proposition}
\newtheorem{ex}[thm]{Example}
\newtheorem{rmk}[thm]{Remark}
\theoremstyle{definition}
\numberwithin{equation}{section}
  \newcommand{\C}{\mathbb{C}}
\newcommand{\cal}{\mathcal }
  \newcommand{\N}{\mathbb{N}}
\newcommand{\D}{\Delta}
\newcommand{\rr}{\mathbb{R}}
\newcommand{\intr}{\int_{\R^2}}
\newcommand{\R}{\mathbb{R}}
\newcommand{\al}{\alpha}
\newcommand{\de}{\delta}
\newcommand{\la}{\lambda}
\newcommand{\into}{\int_{B_1}}
\newcommand{\e}{\varepsilon}
\renewcommand{\(}{\left(}
\renewcommand{\)}{\right)}
\newcommand{\beq}{\begin{equation}}
\newcommand{\eeq}{\end{equation}}
\def\bbm[#1]{\mbox{\boldmath $#1$}}
\begin{document}

\title[Non-simple blow-up]{On the construction of non-simple blow-up solutions \\  for the singular Liouville equation with a potential}
\author[Teresa D'Aprile]{ Teresa D'Aprile}
\author[Juncheng Wei]{Juncheng Wei}
\author[Lei Zhang]{Lei Zhang} 
\address[Teresa D'Aprile] {Dipartimento di Matematica, Universit\`a di Roma ``Tor
Vergata", via della Ricerca Scientifica 1, 00133 Roma, Italy.}
\email{daprile@mat.uniroma2.it}
\address[Juncheng Wei]{Department of Mathematics \\ University of British Columbia\\ Vancouver, BC V6T1Z2, Canada} \email{jcwei@math.ubc.ca }
\address[Lei Zhang]{Department of Mathematics\\
        University of Florida\\
        1400 Stadium Rd\\
        Gainesville FL 32611}
\email{leizhang@ufl.edu}

\thanks{ The research of T. D'Aprile is partially supported by the MIUR Excellence Department Project awarded to the Department of Mathematics, University of Rome ``Tor Vergata", CUP E83C18000100006. The research of J. Wei is partially supported by NSERC of Canada. The research of  Lei Zhang is partially supported by a Simons Foundation Collaboration Grant.}

\begin{abstract}
 We are concerned with the existence of blowing-up solutions
 to the following boundary value problem $$-\Delta u= \la V(x) e^u-4\pi N {\bm{\delta}}_0\;\hbox{ in } B_1,\quad	u=0 \;\hbox{ on }\partial B_1,$$  where $B_1$ is the unit ball in $\R^2$ centered at the origin, $V(x)$ is a positive  smooth  potential,  $N$ is a positive integer ($N\geq 1$). Here ${\bm{\delta}}_0$ defines the Dirac measure with pole at $0$,  and $\la>0$ is a small parameter. 
 We assume that $N=1$ and, under some suitable assumptions on the derivatives of the potential $V$  at $0$,  we find a solution which exhibits a non-simple blow-up profile as
$\la\to 0^+$.

\bigskip

\noindent {\bf Mathematics Subject Classification 2010:} 35J20, 35J57,
35J61

\noindent {\bf Keywords:}  singular Liouville equation, non-simple blow-up,
finite-dimensional reduction
\end{abstract}

\maketitle

\section{Introduction}

Given $\Omega$ a smooth and bounded domain in $\R^2$  containing the origin,  consider the following Liouville equation with Dirac mass measure
\begin{equation}\label{eq0}
  \left\{
      \begin{aligned}&- \D u = \la V(x) e^u-4\pi N {\bm{\delta}}_0&  \hbox{ in }&  \Omega,\\
    &  \ u=0 &  \hbox{ on }& \partial \Omega.
  \end{aligned}
    \right. \end{equation}
Here $\la$ is a positive small parameter, the potential $V$ is a positive and smooth function, ${\bm{\delta}}_0$ denotes Dirac mass supported at $0$ and $N $ is a positive integer.

Problem \eqref{eq0} is motivated by its applications in conformal geometry and several fields of physics, where  quite a few semilinear elliptic
equations defined in two dimensional spaces with an exponential nonlinear term are very
commonly observed and studied. The well known prescribing Gauss curvature equation,
mean field equation, Liouville type equations from the Chern-Simons self-dual theory,
and systems of equations of the Toda system are a few examples of this family. The
analysis of these equations is usually challenging as the interesting exponential nonlinear term is always related to the lack of compactness in the variational approach. One
important feature of these equations is the blow-up phenomenon, the understanding of
which is closely related to results on existence, compactness, a-priori estimates, etc.

 The asymptotic behaviour of  family
of blowing up solutions $u_k$
     can be referred to the papers  \cite{breme}, \cite{chenlin0}, \cite{lishafrir}, \cite{mawe},  \cite{nasu}, \cite{su} for the regular problem, i.e. when $N=0$. An extension to the singular case $N>0$
         is contained in \cite{bachenlita}-\cite{bata}.
If a blow-up point $p$ is either a regular point or a ``non-quantized" singular source, the asymptotic behavior of $u_k$ around $p$  is well understood (see \cite{bachenlita,bata,chenlin00,chenlin0,gluck,li,zhang0,zhang}). As a matter of fact, $u_k$ satisfies the spherical Harnack inequality around $0$, which implies that, after scaling, the sequence $u_k$  behaves as a single bubble around the maximum point.  However, if $p$ happens to be a quantized singular source, the so-called ``non-simple" blow-up phenomenon does happen (see \cite{KuoLin, WeiZhang,WeiZhang1, WeiZhang2}), which is equivalent to stating that $u_k$ violates the spherical Harnack inequality around $p$.   The study of non-simple blow-up solutions, whether or not the blow-up point has to be a critical point of coefficient functions,    has been a major challenge for Liouville equations and its research has intrigued people for years. Recently significant progress has been made by Kuo-Lin, Bartolucci-Tarantello and other authors (\cite{bata2,miowei,KuoLin,WeiZhang,WeiZhang1, WeiZhang2}. In particular  it is established in \cite{bata0} and \cite{KuoLin} that  there are $N + 1$ local maximum points and they are evenly distributed on $\mathbb{S}^1$ after scaling according to their magnitude.
  In \cite{WeiZhang} and \cite{WeiZhang1} Harnack inequalities and second order vanishing conditions for non-simple blow-ups are  obtained.

 The case $N\in\mathbb{N}$ is more difficult to treat, and at the same time the most relevant to physical applications. Indeed, in vortex theory the number
$N$
represents vortex multiplicity, so that  in that context the most interesting case is
precisely
when it is a positive integer. The difference between the case $N\in \mathbb{N}$ and $N\not\in \mathbb{N}$ is also analytically essential. Indeed,  as usual in
problems involving  concentration phenomena like \eqref{eq0},  after
suitable rescaling of the blowing-up around a concentration point one sees a limiting equation which, in this case, takes the form of the planar singular Liouville equation:
$$-\Delta U=e^U-4\pi N\delta_0, \quad \int_{\mathbb R^2} e^U dx <\infty;$$ 
 only if $N\in\mathbb{N}$ the above limiting equation   admits non-radial solutions around $0$
 since the family of all solutions extends to one carrying an extra parameter  (see \cite{prata}). This suggests that if $N\in\mathbb{N}$  and the blow-up point happens to be the singular source, then  solutions of \eqref{eq0}  may exhibit non-simple blow-up phenomenon.

So,   from analytical viewpoints the study of non-simple blow-up solutions is far more challenging than simple blow-up solutions, but the impact of this study may be even more significant because they represent certain situations in the blow-up analysis of systems of Liouville equations. Indeed, if local maxima of blow-up solutions in a system tend to one point, the profile of solutions can be described by a Liouville equation with quantized singular source. For all this reasons,  it is desirable to know exactly when non-simple blow-up phenomenon happens. 

However,  the question on the existence of non-simple blowing-up  solutions     to \eqref{eq0} concentrating at $0$     is far from being completely settled. A first definite answer is provided by  \cite{dazhangwei}  which rules out the non-simple blow-up phenomenon for \eqref{eq0} if the potential $V$ is constant: more precisely it is established that there is no non-simple blow-up sequence for \eqref{eq0} with $V=const.$,  even if we are  in the presence of multiples singularities $\sum_i N_i{\bm{\delta}}_{p_i}$.  Apart from this, only partial results are known: in \cite{miowei} the construction of solutions exhibiting a non simple blow-up profile  at $0$  is carried out for equation \eqref{eq0} with $V\equiv 1$ provided that  $\Omega$ is the unit ball and the weight of the source is a positive number $N=N_\la$ close an integer $N$ from the right side.  On the other hand, in\cite{delespomu},  for any fixed positive integer  $N$,   it is proved the existence of  a solution to \eqref{eq0} with $V\equiv 1$, where  ${\bm{\delta}}_0$ is replaced by ${\bm{\delta}}_{p_\la}$ for  a suitable $p_\la\in \Omega$,   with $N+1$ blowing up points  at the vertices of a sufficiently tiny regular polygon  centered in $p_\la$; moreover the location of $p_\la$ is determined by the geometry of the domain in a $\la-$dependent way and does not seem possible to be prescribed arbitrarily.  To our knowledge, the existence of non-simple blow-up phenomenon  for \eqref{eq0} for  a fixed $V$ and a fixed $N$  independent of $\la$ is still open, even in the case of the ball: the only  example is constructed in \cite{mioboh}   for a special class of potentials of the form $V(|x|^{N+1})$.

\

In this paper we investigate the existence of non-simple  blow-up solutions when $\Omega$ is the unit ball $B_1$ centered at the origin, the potential $V_\la=V$ is fixed  and $N=1$:

\begin{equation}\label{eq}
  \left\{
      \begin{aligned}&- \D u = \la V(x)e^u-4\pi  {\mathcal \delta}_0&  \hbox{ in }&  B_1,\\
    &  \ u=0 &  \hbox{ on }& \partial B_1.
  \end{aligned}
    \right. \end{equation}

Let us pass to enumerate the hypotheses on the potential $V$ that will be steadily used throughout the paper. 

\begin{enumerate}

\item[${\mathrm{(H1)}}$] $\inf_{B_1} V(x)>c>0$ for a positive constant $c$ independent of $\la$ and, without loss of generality, we may assume $V(0)=1$;

\
\item[${\mathrm{(H2)}}$] 
$V(x)$  is even, i.e. $$ V(x)=V( -x)\quad \forall x\in B_1.$$ 
\end{enumerate}

Furthermore we will require sufficient regularity of $V$ at $0$ together with   crucial  conditions on the derivatives of $V$ at $0$: \begin{enumerate}
\item[${\mathrm{(H3)}}$] $V(x)$  is of class $C^1$  in the closed unit ball $\overline{B}_1$ and  
the following holds:
\beq\label{expexp}\begin{aligned}V(x)=&\;1+ A_0(x_1^4+x_2^4)+A_1(x_1^3x_2-x_1x_2^3)+ A_2x_1^2x_2^2
\\ &+D_0(x_1^6-x_2^6)+D_1(x_1^5x_2+x_1x_2^5)+D_2(x_1^4x_2^2-x_1^2x_2^4)+D_3x_1^3x_2^3
+O(|x|^7)\end{aligned}\eeq
for some constants $A_0,A_1,A_2,D_0, D_1, D_2, D_3\in \R$.
\end{enumerate}

Let us comment on assumption ${\mathrm{(H3)}}$:  in \cite{WeiZhang}, \cite{WeiZhang1}, \cite{WeiZhang2} the second and the third authors proved that if  non simple blow up scenarios occur for equation \eqref{eq}, then  the first derivatives as well as the Laplacian of coefficient functions must
tend to zero at the singular source;  so 
the vanishing of the second order terms in the expansion \eqref{expexp} is not surprising. Moreover, the analysis reveals that the relation between the forth derivatives and between the sixth derivatives plays a crucial role since it guarantees that the non simple blow-up solutions can be accurately approximated by  global solutions by    allowing an a priori estimate for the error which turns out to be  sufficiently small (see Remark \ref{banale} and Remark \ref{banale1}). 

 In order to provide  the exact formulation of the
result let us fix some notation: in the following $G(x, y)$ is the Green's function of $-\Delta$ over $\Omega$ under Dirichlet boundary conditions and $H(x,y)$ denotes its regular part:
$$H(x,y):=G(x,y)-\frac{1}{2\pi}\log\frac{1}{|x-y|}.$$
In the case of the unit ball we have the explicit formula for the regular part of the Green function in $B_1$ which is given by
\beq\label{accor}H(x,y)=\frac{1}{2\pi}\log\bigg(|x|\bigg|y-\frac{x}{|x|^2}\bigg|\bigg),\quad x,y\in B_1.\eeq 

Then the main result of this paper provides a sufficient condition on the potential $V$, in addition to the assumptions  $\mathrm{(H1)-(H2)-(H3)}$, which implies that \eqref{eq} admits a family of non-simple blowing-up solutions. Such a sufficient condition is expressed in terms of the concept of stable zeroes for a suitable vector field.

\begin{thm}\label{th2} Assume that hypotheses $\mathrm{(H1)-(H2)-(H3)}$ hold and, in addition, \beq\label{techni}A_0=2,\;\; A_1=0\;\; A_2=4.\eeq Let $\xi\in \R^2$, $\xi\neq 0$,  be a zero for the following vector field which is stable under  uniform perturbations\footnote{Given  
$F:\R^2\to\R^2$  a continuous vector filed, we say that $\xi$ is  a zero for $F$ which is stable with respect to uniform perturbations if $F(\xi)=0$ and for any neighborhood $U$ of $\xi$ and  $\epsilon>0$ there exists $\eta>0$ such that if $\Psi:U\to \R^2$ is continuous and $\|\Psi-F\|_\infty\leq \eta$, then $\Psi$ has a zero in $U$. A sufficient condition which implies that $0$ is a stable zero of a vector field $F$ is
$deg(F, U, 0)\neq 0$ for some neighborhood $U$ of $\xi$, where $deg$ denotes the standard Brower degree. } 
\beq\label{vector}(\xi_1,\xi_2)\longmapsto 
\left(\begin{aligned} & 3D_0\xi_1^2+D_1\xi_1\xi_2+\frac{3D_0-D_2}{4}\xi_2^2+\frac{15D_0-D_2}{4}\\ & 
\frac{D_1}{2}\xi_1^2+\frac{3D_0-D_2}{2}\xi_1\xi_2+3\frac{2D_1+D_3}{8}\xi_2^2+\frac{10D_1+3D_3}{8}\end{aligned}\right)
.\eeq 
 Then, for $\la$ sufficiently small
the problem \eqref{eq} has a family of solutions $u_\la$  blowing up at the origin as $\la\to 0^+$:
$$\la e^{u_\la}  \to 16\pi {\bm{\delta}}_0\;\;\hbox{ in the measure sense.}$$
 More precisely there exist $\de=\de(\la)>0$ and
$b=b(\la)\in B_1$ in a neighborhood of $0$ such that $u_\la$ satisfies
$$u_\la+4\pi  G(x,0)=-2\log\big(\mu^{4}+|x^{2}-b|^{2}\big)+8\pi H(x^{2},b)+o(1)$$  in $H^1$-sense, where 
 \beq\label{saltria}b(\la) =\frac{\xi_0}{4\sqrt 2}\sqrt{\la \log\frac{1}{\la}}\big(1+o(1)\big), \qquad \mu^2(\la)= \frac{\sqrt\la}{4\sqrt2} (1+o(1)) .\eeq
In particular,   $\mu^2=o(|b|).$
\end{thm}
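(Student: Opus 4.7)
The natural first step is to exploit $(H2)$ and the multiplicity $N=1$ of the singularity at the origin to reduce \eqref{eq} to a regular Liouville equation by the complex squaring $y=x^{2}$. For any even $u$, setting $\tilde v(y):=u(x)-2\log|x|=u(x)+4\pi G(x,0)$ (using $H(x,0)\equiv0$ from \eqref{accor}) and noting that the chain rule gives $\Delta_{x}\tilde v(x^{2})=4|y|\Delta_{y}\tilde v(y)$, one finds that $\tilde v$ satisfies the regular problem
$$-\Delta\tilde v=\tfrac{\la}{4}\tilde V(y)e^{\tilde v}\quad\hbox{in }B_{1},\qquad \tilde v=0\quad\hbox{on }\partial B_{1},$$
with $\tilde V(y):=V(x)$ well-defined by $(H2)$. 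The algebraic significance of \eqref{techni} is precisely that it forces the quartic part of $V$ to collapse to the radial expression $2(x_{1}^{2}+x_{2}^{2})^{2}=2|y|^{2}$, so
$$\tilde V(y)=1+2|y|^{2}+Q(y)+O(|y|^{7/2}),$$
where $Q(y)$ is a homogeneous cubic polynomial with coefficients explicit linear combinations of $D_{0},\ldots,D_{3}$.

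Next I would carry out a standard Lyapunov-Schmidt reduction around the projected Liouville bubble $PW_{\mu,b}(y)=W_{\mu,b}(y)-\log(8\mu^{4})+8\pi H(y,b)+O(\mu^{4})$, where $W_{\mu,b}(y)=\log\bigl(8\mu^{4}/(\mu^{4}+|y-b|^{2})^{2}\bigr)$. Write $\tilde v=PW_{\mu,b}+\phi$, with $\phi$ orthogonal to the three-dimensional kernel generated by $\partial_{\mu}PW_{\mu,b}$ and $\partial_{b_{j}}PW_{\mu,b}$ ($j=1,2$); solve for $\phi$ by contraction mapping in a suitable weighted norm, obtaining a sharp estimate $\|\phi\|_{*}=o(\sqrt{\la\log(1/\la)})$. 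The projection of the equation onto $\partial_{\mu}PW_{\mu,b}$ yields $\mu^{4}=(\la/32)\tilde V(b)e^{8\pi H(b,b)}(1+o(1))$, hence $\mu^{2}=\sqrt\la/(4\sqrt 2)(1+o(1))$.

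The heart of the proof is the two translation equations. At leading order they reduce to $\nabla_{b}\bigl[\log\tilde V(b)+4\pi H(b,b)\bigr]=0$; the miracle of \eqref{techni}, combined with $4\pi H(b,b)=-2|b|^{2}+O(|b|^{4})$, is that the quadratic-in-$b$ parts of $\log\tilde V(b)$ and $4\pi H(b,b)$ cancel exactly, so $\log\tilde V(b)+4\pi H(b,b)=Q(b)+O(|b|^{4})$. Rescaling $b=\xi\sqrt{\la\log(1/\la)}/(4\sqrt 2)$ one finds that the translation equations become $\la\log(1/\la)\,(F(\xi)+o(1))=0$ with $F$ exactly the vector field \eqref{vector}. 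The quadratic part of $F$ in $\xi$ is (a constant multiple of) $\nabla Q(\xi)$; the constant vector in $F$ arises from the third-order Taylor contribution to the integral $\int f(y)e^{W}\partial_{b_{j}}W\,dy$, where $f(y):=\tilde V(y)/\tilde V(b)\cdot e^{8\pi(H(y,b)-H(b,b))}$: the constant tensor $D^{3}f(b)=D^{3}Q+O(|b|)$ is coupled to the integrals $\int z_{i}z_{l}z_{m}z_{j}/(1+|z|^{2})^{3}\,dz$, whose logarithmic divergence on $\R^{2}$ (cut off at $|z|\sim\mu^{-2}$) produces precisely a contribution of order $\mu^{4}\log(\mu^{-2})\sim\la\log(1/\la)$, matching the size of the quadratic part. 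Finally, the stability of $\xi_{0}$ under uniform perturbations---made quantitative via Brouwer degree---yields $\xi_{\la}\to\xi_{0}$ solving $F+o(1)=0$, hence the desired family $u_{\la}$. The mass concentration $\la e^{u_{\la}}\to 16\pi\bm\delta_{0}$ follows from $\int\tfrac{\la}{4}\tilde V e^{\tilde v}\to 8\pi$ together with the two-to-one Jacobian factor produced by $y=x^{2}$.

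The main obstacle is the sharp identification of the vector field $F$. The cancellation forced by \eqref{techni} elevates several originally sub-leading terms to leading order, so one must expand simultaneously to sixth order in $x$ (to extract $Q$), to quartic order in $b$ for the Robin function $H(b,b)$, and to two orders beyond the naive one in the Taylor expansion of $f$ around $y=b$, tracking the logarithmic constants from the far tail of the bubble. The error $\phi$ must therefore be controlled to higher precision than in a generic Lyapunov-Schmidt argument, since any remainder of size $\gtrsim\la\log(1/\la)$ would contaminate the constant part of $F$ and invalidate the degree argument; this is exactly the role of the two $C^{1}$-regularity assumption plus the full expansion to order $|x|^{7}$ in $(H3)$.
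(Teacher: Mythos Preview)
Your approach is essentially the paper's: complex squaring $y=x^{2}$ to pass to a regular Liouville problem, a projected-bubble ansatz, Lyapunov--Schmidt reduction, the observation that \eqref{techni} kills the $O(|b|)$ force so the cubic $Q$ governs the quadratic part of $F$, and the logarithmic divergence of $\int z_iz_jz_lz_m(1+|z|^{2})^{-3}dz$ producing the constant part of $F$. The only structural difference is cosmetic: the paper fixes the dilation $\delta$ a priori by \eqref{delta} and projects orthogonally only onto the two translation modes $PZ^{1}_{\la},PZ^{2}_{\la}$, whereas you keep a three-dimensional kernel and recover the same relation for $\mu$ from the $\partial_{\mu}$-equation.

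There is, however, one quantitative inconsistency you should repair. The bound $\|\phi\|_{*}=o\big(\sqrt{\la\log(1/\la)}\big)=o\big(\de\sqrt{\log(1/\de)}\big)$ is too weak: the nonlinear contribution $\int e^{W_{\la}}(e^{\phi}-1-\phi)PZ^{i}_{\la}$ is controlled by $\|\phi\|^{2}$, which with your bound is only $o(\de^{2}\log(1/\de))$, i.e.\ the \emph{same} order as the leading term $\de^{3}\log(1/\de)$ of $\int R_{\la}PZ^{i}_{\la}$ divided by the normalization---so it would indeed contaminate $F$. The paper obtains (and needs) $\|\phi\|_{H^{1}_{0}}\le \de^{2-\e}\sim\la^{1-\e/2}$, coming from $\|R_{\la}\|_{p}=O(\de^{2}+|b|^{3})\de^{-2(p-1)/p}$ together with $|b|\le\de^{2/3}$; this is exactly the ``higher precision'' you allude to in your final paragraph, and you should state it as the actual size of $\phi$ rather than $o(\sqrt{\la\log(1/\la)})$.
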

The solution constructed in Theorem \ref{th2} reveals a non-simple blow-up profile: indeed, denoting by $\pm\beta $ the square complex roots of $b$,  since the rate of convergence $\beta\to 0$ is lower than the speed of the concentration parameter $\mu\to 0$ (see estimate \eqref{saltria}), then $u_\la$ develops  $2$  local maximum points  concentrating at $0$
 which are arranged 
 close
 to two opposite vertices. 
 The analysis shows that the configuration of the limiting local maxima  is
determined by the interaction of two crucial aspects: the effect of the potential $V$, which tends to shrink the bubble to $0$, and the boundary effect,
represented by the Robin function $H(\xi,\xi)$, which tends to repel the bubble from $0$. On the other hand,  the existence of this kind of non-simple blow-up is still open for more general potential $V$. Indeed, as we will observe in Remark \ref{banale1}, if we apply our method for generic values $A_0, A_1, A_2$ not satisfying \eqref{techni}, then  we
find out that the forces exerted between the potential and the boundary may not balance and we
are unable to catch a solution different from the radially symmetric one.

\begin{rmk}\label{potential}  Let us observe that $F$ actually corresponds to a gradient field, precisely $F(\xi)=\nabla J(\xi)$, where the potential $J$ is given by
$$J(\xi)=D_0 \xi_1^3+\frac{D_1}{2}\xi_1^2\xi_2+\frac{3D_0-D_2}{4}\xi_1\xi_2^2+ \frac{2D_1+D_3}{8}\xi_2^3+   \frac{15D_0-D_2}{4}\xi_1+\frac{10D_1+3D_3}{8}\xi_2.$$
\end{rmk}

\begin{ex} Let us provide  explicit examples of coefficients $D_0, D_1, D_2$ for which $0$ is a stable zero for $F$, so that, according to Theorem \ref{th2} the corresponding $V$ will produce a non simple blowing up solution for eqation \eqref{eq}. Indeed,
if we take $D_0=0,$ $D_1=2$, $D_2=-4$, $D_3=-4$,     then the potential $J$ defined in Remark \ref{potential} becomes
$$J(\xi)=\xi_1^2\xi_2 +\xi_1\xi_2^2 +\xi_1+\xi_2.$$
It is immediate to check that $(1,-1)$ (respectively $(-1,1)$) is a  critical points for $J$, so $$F(1,-1)=\nabla J(1,-1)=0$$
Moreover, the Hessian matrix of $J$ at $(1,-1)$ is given by $$\left(\begin{aligned}-&2&0&\\ &0&2&\end{aligned}\right).$$
Then $(1,-1)$ (respectively $(-1,1)$) turns out to be a nondegenerate critical point for $J$ of saddle type, so $deg(F, U, 0)\neq 0$ where $deg$ denotes the standard Brower degree and $U$ is a sufficiently small neighbourhood of $(1,-1)$. Consequently, $(1,-1)$ is a stable (with respect to uniform perturbations) zero for $F$. 
Then, according to Theorem \ref{th2}, 
if $V$ satisfies $\mathrm{(H1)-(H2)}$ and 
$$V(x)=1+ 2x_1^2+4x_1^2x_2^2+2x_2^4+2(x_1^5x_2+x_1x_2^5)+4(x_1^4x_2^2-x_1^2x_2^4)-4x_1^3x_2^3
+O(|x|^7)$$
then
$\xi_0:=(1,-1)$ (respectively $\xi_0:=(-1,1)$) gives rise to a non-simple blowing up family of solutions to \eqref{eq}.
\end{ex}

\

\
The phenomena of non-simple bubbling solutions not only occur in single equations, but also in systems. In a recent work of the second author and Gu (\cite{Gu}) the
non-simple blow-up behaviours  are studied for singular Liouville systems. In another work of the second, third authors and Wu \cite{wei-wu-zhang-1} non-simple blowup is ruled out for Toda systems. Examples of non-simple blow-up solutions are available for other models: we
recall, for instance, the Liouville equation with anisotropic coefficients in \cite{weiyezhou} and  the Toda system in \cite{weiwei}.

\

The proofs use singular perturbation methods which combine the variational approach with a Lyapunov-Schmidt type procedure. Roughly speaking, the first step consists in the construction of an approximate solution, which should turn out to  be precise enough. In view of the expected asymptotic behaviour,  the shape of such  approximate solution will resemble, after the change of varia\-bles $x\mapsto x^{1/2}$, a \textit{bubble} of the form \eqref{bubble} with  a suitable choice of the parameter $\de=\de(\la,b)$. Then  we look for a solution to \eqref{eq} in a small neighborhood of the first approximation.  As quite standard in singular perturbation theory, a crucial ingredient is nondegeneracy of
the explicit family of
solutions of the limiting Liouville problem \eqref{limit}, 
as established in \cite{bapa}.
This allows us to study the invertibility of the linearized operator associated to the problem \eqref{eq}  under suitable orthogonality conditions. Next we introduce an intermediate problem and  a fixed point argument will provide a solution for an  auxiliary equation, which turns out to be solvable for any choice of $b$. Finally we test the auxiliary equation on the elements of the kernel of the linearized operator  and we find out that,
 in order to find an \textit{exact} solution of \eqref{eq}, the location of the maximum points,  which is detected by the parameter $b$, should be a zero for a reduced finite dimensional map. The  main technical difficulty in the proof is that we need to expand the reduced map up to higher orders to catch a nontrivial zero $b$, which will give rise to a non-simple blow-up  solution. Moreover the method fails for $N\geq2$: indeed if we try to apply our technique to $N\geq 2$, then the analogous of assumption $\mathrm{(H3)}$ would give that the potential has vanishing derivatives up to the order $N+1$ at $0$, and this implies that the approximation rate  for the reduced finite dimensional map is unfortunately not sufficiently small to carry out the argument.

\

The rest of the paper is organized as follows. Section 2 is devoted to some preliminary
results, notation, and the definition of the approximating solution. Moreover, a more
general version of Theorems \ref{th2} is stated there (see Theorem \ref{main2}).  In Section 3  we
sketch the solvability of the linearized problem by referring to \cite{delkomu} and \cite {espogropi} for the proof.  The error up to which the approximating solution solves problem \eqref{eq} is estimated in Section 4.   Section 5 considers the solvability of an auxiliary problem by a  contraction argument. In Section 6  we
complete the proof of Theorem \ref{th2}. In Appendix A we collect some results, most of them well-known,  which are usually referred to throughout the
paper.

\

\noindent{\bf{NOTATION}}: In our estimates throughout the paper, we will frequently denote by $C>0$, $c>0$ fixed
constants, that may change from line to line, but are always
independent of the variables under consideration. 

\section{Preliminaries and statement of the main results}

We are going to provide an equivalent formulation of problem \eqref{eq} and Theorem \ref{th2}. Indeed, 
setting  $v$ the regular part of $u$, namely \beq\label{chva}v= u+4\pi  G(x, 0)= u+2\log\frac{1}{|x|},\eeq  problem \eqref{eq} is then equivalent to solving the following  boundary value problem
\beq\label{proreg}\left\{\begin{aligned} & -\Delta v=\la  |x|^{2}V(x)e^v&\hbox{ in }& B_1\\ &v=0&\hbox{ on }&\partial B_1\end{aligned}\right..\eeq
Here $G$ and $H$ are the Green's function and its regular part as defined in the introduction.

 In what follows, we identify $x=(  x_1,x_2)\in \R^2$ with $x_1+{\rm i}x_2\in \C$ and   we denote by $x\,y$ the multiplication of the complex numbers $x,y$ and, analogously,  by $x^2$ the square of the complex number $x$.

Since  $V$ and the solutions  considered in the paper are
even, we can rewrite problem  \eqref{proreg} as a regular Liouville problem: more precisely, denoting by $x^{\frac12}$ the complex $2$-roots of $x$, the change of variables \beq\label{chva1}w(x)= v\big(x^{\frac{1}{2}}\big)\eeq transforms problem \eqref{proreg} into a (regular) Liouville problem of the form 

\beq\label{proreg1}\left\{\begin{aligned}&-\Delta w= \frac{\la}{4} V \big(x^{\frac12}\big)e^w&\hbox{ in }&B_1\\ &w=0&\hbox{ on }&\partial B_1\end{aligned}\right..\eeq

Theorem \ref{th2} will be a consequence of  a more general result concerning Liouville-type problems. In order to provide such  a result, we now give a construction of a suitable approximate solution for \eqref{proreg1}.
We can associate to \eqref{proreg1} a limiting problem of Liouville type which will play a crucial role in the construction of blowing up solutions  as $\la\to 0^+$:  \beq\label{limit}
-\Delta W=e^W\quad \hbox{in}\;\; \rr^2,\qquad
\int_{\R^2} e^{W(x)}dx<+\infty.
\eeq
All solutions of this problem  are given, in complex notation,  by the three-parameter family of functions
\beq\label{bubble}
W_{\de,b}(x):=\log  \frac{8\de^{2}}{ (\de^{2}+|x-  b|^2)^2}\quad
\de>0,\,b\in \C.
\eeq
The following quantization property holds: \beq \label{quantum} \int_{\R^2}
e^{W_{\de,b}(x)}dx = 8 \pi  .\eeq
In the following we agree that
$$W_{\la}(x)=W_{\de,b}(x),\quad \de>0,\,b\in \C,$$ where the value $\delta=\delta(\la,b)$ is defined by

\begin{equation} \label{delta} \delta^{{2}}:=\frac{\la}{32}{V}(b^{\frac12})e^{8\pi H(b,b)} =\frac{\la}{32}{V}(b^{\frac12})(1-|b|^2)^4. \end{equation}

We point out that the diagonal $H(b,b) $ appearing in \eqref{delta} is called the Robin function of the domain and in the case of the ball it takes the form$$H(x,x)=\frac{1}{2\pi}\log(1-|x|^2),\quad x\in B_1$$ according to \eqref{accor}. 
To obtain a better first approximation, we need to modify the function $W_{\la}$   in order to satisfy the zero boundary condition. Precisely, we consider the projection $P W_{\la} $ onto the space $ H^1_{0}( B_1 )$, where the projection  $P:H^1(\R^N)\to  H^1_{0}( B_1 )$ is
defined as the unique solution of the problem
$$
 \Delta P v=\Delta v\quad \hbox{in}\  B_1 ,\qquad  P v=0\quad \hbox{on}\ \partial B_1 .
$$
We recall   that  the regular part $H(x,b)$ of the Green function, defined in \eqref{accor},  is harmonic in $B_1$ and satisfies $ H (x,b)=\frac{1}{2\pi}\log|x-b|$   for $x\in\partial  B_1 ;$ a straightforward computation gives that for any 
 $x\in\partial B_1 $ 
$$\begin{aligned}PW_{\la}- W_{\la}+\log\(8\de^{2}\)-8\pi H(x,b)&=- W_{\la}+\log\(8\de^{2}\)-4\log|x-b|
=2\log\Big(1+\frac{\de^2}{|x-b|^2}\Big)
\\ &= 2\frac{\de^2}{|x-b|^2}+O(\de^4) = 2\frac{\de^2}{1+O(|b|)}+O(\de^4)\\ &= 2\de^2+O(\de^2|b|)+O(\de^4)
\end{aligned}$$
with uniform estimate for  $x\in\partial B_1$ and $b$ in a small neighborhood of $0$.
Since the  expressions $PW_{\la}- W_{\la}+\log\(8\de^{2}\)-8\pi H(x,b)$  and $2\de^2$ are    harmonic in $ B_1 $, then the maximum principle applies and implies
the following asymptotic expansion
\beq\label{pro-exp1}\begin{aligned}
 PW_{\la}=& W_{\la}-\log\(8\de^{2}\)+8\pi H (x,b)+2\de^2+O(\de^2|b|)+O(\de^4)\\ & 
 =-2\log\(\de^{{2}}+|x-b|^2\)+8\pi H (x,b)+2\de^2+O(\de^2|b|)+O(\de^4)
\end{aligned}\eeq
uniformly for $x\in \overline B_1  $ and $b$ in a small neighborhood of $0$.

We point out that, in order to simplify the notation,  in our estimates throughout the paper we will describe the asymptotic behaviors of quantities under considerations in terms of $\de=\de(\la, b)$ instead of the parameter $\la$ of the equation. Clearly according to \eqref{delta} $\delta$ has the same rate as $\la^{\frac{1}{2}} $, so at each step  we can easily pass to the analogous asymptotic in terms of $\la$: for instance, in \eqref{pro-exp1} the  error term ``$O(\de^{4})$" can be equivalently replaced by ``$O(\la^2)$".

We shall look for a solution to \eqref{proreg1} in a small neighborhood of the first approximation, namely a solution of the form
 $$w_\la=PW_{\la}
 + {\phi}_\la,$$ where the rest term
$\phi_\la$ is small in
$H^1_0( B_1 )$-norm.

Let us reformulate the main theorem for problem \eqref{proreg1}, which prove that a  non symmetric blow-up occurs for problem \eqref{proreg1}. 
More precisely, we provide a solution which develops a bubble centered at a point $b$; and since the rate of convergence $b\to 0^+$ is lower than the speed of the concentration parameter $\de\to 0^+$ (see estimate \eqref{saltria11}), then the blowing up turns out to be non symmetric in the first approximation.

\begin{thm} \label{main2} 
Assume that hypotheses  ${\mathrm{(H1)-(H3)}}$ and \eqref{techni} hold.  Let $b\in \R^2$ be a zero for the vector field \eqref{vector} which is stable under uniform perturbations. 
  Then, for $\la$ sufficiently small
the problem \eqref{proreg1} has a family of solutions $w_\la$  satisfying
$$w_\la=-2\log\big(\de^{2 }+|x-b_\la|^{2}\big)+8\pi H (x,b_\la)+o(1)$$  in $H^1$-sense, where 
  \beq\label{saltria11}b_\la =\frac{\xi_0}{4\sqrt 2}\sqrt{\la \log\frac{1}{\la}}\big(1+o(1)\big).\eeq
In particular,  by \eqref{delta},  $\de^2=o(|b_\la|).$
\end{thm}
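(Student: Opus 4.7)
The plan is to carry out a Lyapunov--Schmidt finite-dimensional reduction around the approximate family $PW_{\de,b}$ built in \eqref{pro-exp1}. I would look for a solution of \eqref{proreg1} of the form $w_\la = PW_\la+\phi$ with $\phi\in H^1_0(B_1)$ small in a suitable weighted norm, so that the equation becomes $L_\la[\phi] = -R_\la - N_\la[\phi]$, where $L_\la$ is the linearization at $PW_\la$, $R_\la$ is the error of the approximation, and $N_\la$ collects the quadratic-and-higher remainder. The linear theory is the standard one for this class of problems: the nondegeneracy result of \cite{bapa} for the limiting Liouville equation \eqref{limit}, combined with the projection arguments of \cite{delkomu,espogropi}, provides invertibility of $L_\la$ (of norm $O(|\log\de|)$) on the space orthogonal to the three-dimensional kernel generated by $PZ_0, PZ_1, PZ_2$, the projected derivatives of $W_{\de,b}$ with respect to $\de,b_1,b_2$.

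Next I would estimate $R_\la$. Using \eqref{pro-exp1}, the definition of $\de$ in \eqref{delta}, and the Taylor expansion of $V(x^{1/2})$ around $x=b$ supplied by \eqref{expexp} together with the evenness hypothesis $\mathrm{(H2)}$, the bulk of the error is controlled by the deviation of $\tfrac{\la}{4} V(x^{1/2})e^{PW_\la}$ from $e^{W_\la}$, which is expressed through $V-1$, the boundary correction $2\de^2$, and the harmonic part $8\pi H(x,b)$. The technical choice \eqref{techni} is precisely what is needed to make the fourth-order contribution of $V$ absorb the dominant boundary interaction, leaving a remainder of size $\de^{2+\sigma}$ for some $\sigma>0$ in the regime $|b|^2\sim \la|\log\la|$ dictated by \eqref{saltria11}. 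A contraction mapping argument applied to $\phi\mapsto L_\la^{-1}(-R_\la-N_\la[\phi])$ then yields, for every $b$ in a suitable neighborhood of $0$, a unique $\phi = \phi(\la,b)$ solving the auxiliary (projected) problem modulo three Lagrange multipliers $c_0,c_1,c_2$, with $\phi$ smooth in $b$ and of the same size as $R_\la$.

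To obtain an actual solution of \eqref{proreg1} one must kill $c_0,c_1,c_2$ by an appropriate choice of $b = b_\la$. The evenness $\mathrm{(H2)}$ and the specific scaling of $\de$ in \eqref{delta} make the $c_0$-equation (the dilation direction) trivially satisfied or of lower order, so the problem reduces to a two-dimensional vector equation $F_\la(b)=0$ for the translation parameters $b_1,b_2$. I would expand $F_\la$ by testing the equation against $PZ_1, PZ_2$ and computing the integrals $\int V(x^{1/2})e^{PW_\la}Z_j$ via the expansion \eqref{expexp}. Because \eqref{techni} eliminates the $(A_0,A_1,A_2)$-contribution to $F_\la$, the leading nontrivial terms arise from the sixth-order coefficients $D_0,D_1,D_2,D_3$ of $V$ and from the Robin-type boundary term $H(b,b)=\tfrac{1}{2\pi}\log(1-|b|^2)$. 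Substituting $b = \tfrac{1}{4\sqrt 2}\sqrt{\la\log\tfrac{1}{\la}}\,\xi(1+o(1))$ and collecting the dominant powers of $\la|\log\la|$ should reproduce exactly the vector field \eqref{vector}; a stable zero $\xi_0$ of that field, via its uniform-perturbation stability (or the Brouwer degree), then provides a true zero $b_\la$ of $F_\la$ with the prescribed asymptotics \eqref{saltria11}.

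The main obstacle, and what the rest of the paper will have to grind out, is the last step: expanding the reduced map $F_\la$ far enough to uncover the field \eqref{vector}. Since the would-be leading-order contribution is killed by \eqref{techni}, one must push the expansion two orders beyond the usual one and track the interaction between the sixth-order Taylor coefficients of $V$ and the boundary (Robin) effect without losing control of the remainder. This forces both the explicit form \eqref{expexp} and the sharp bound $\|R_\la\|_*=o(\de^{2+\sigma})$ with $\sigma$ chosen so that the error sits below the scale $\la|\log\la|$ at which the balance defining $\xi_0$ takes place. It is precisely this delicate cancellation that fails for $N\ge 2$, as observed in the introduction, and that explains why the vector field \eqref{vector} has the specific algebraic structure described in Remark \ref{potential}.
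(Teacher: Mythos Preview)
Your outline is correct and matches the paper's strategy closely: Lyapunov--Schmidt reduction around $PW_\la$, linear theory borrowed from \cite{delkomu,espogropi} via the nondegeneracy in \cite{bapa}, error estimate exploiting \eqref{techni} to kill the fourth-order term (this is exactly Remark~\ref{banale}), contraction for the projected problem, and finally the reduced two-dimensional equation expanded until the sixth-order coefficients $D_0,\dots,D_3$ produce the field \eqref{vector}.

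One technical point where the paper differs slightly from your sketch: the reduction is two-dimensional from the outset, not three. The paper fixes $\de=\de(\la,b)$ by \eqref{delta} \emph{before} setting up the linear problem, so the space $K$ is $\mathrm{span}\{PZ^1_\la,PZ^2_\la\}$ and there is no $c_0$ to eliminate afterwards; the invertibility estimate of Proposition~\ref{linear2} already holds on this two-codimensional subspace. Your claim that the $c_0$-equation is ``trivially satisfied'' by the choice of $\de$ is morally right, but the paper implements it upstream rather than as a separate step. Also, the paper works in the plain $H^1_0$ norm (not a weighted norm), and only continuity of $b\mapsto\phi_{\la,b}$ is established (not smoothness); continuity suffices for the stable-zero argument. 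Finally, evenness $\mathrm{(H2)}$ is not what handles the dilation direction---its role is earlier, to make the change of variables $x\mapsto x^{1/2}$ and hence the potential $V(x^{1/2})$ in \eqref{proreg1} well defined.
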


In the remaining part of this paper we will prove Theorems \ref{main2} and at the end of Section 6 we shall see how Theorems \ref{th2} follows quite directly as a corollary. 

We end this section by setting  notation and basic well-known
facts which will be of use in the rest of the paper. Given $\Omega$ a bounded domain, we  denote by  $\|\cdot\|$ and $\|\cdot\|_p$  the norms in  the space $ H^1_0(\Omega)$ and $L^p(\Omega)$, respectively, namely
 $$\|u\|:=\|u\|_{ H^1_0(\Omega)}
 ,\qquad \|u\|_p:=\|u\|_{L^p(\Omega)}
 \quad \forall u\in  H^1_0(\Omega).$$

In next lemma we recall the well-known Moser-Trudinger inequality
(\cite{Moe, Tru}).

\begin{lemma}\label{tmt} There exists $C>0$ such that for any bounded domain $\Omega$ in $\rr^2$
 $$\int_\Omega e^{\frac{4\pi u^2}{\|u\|^2}}dy\le C |\Omega|\quad \forall u\in{ H}^1_0(\Omega),$$ where  $|\Omega|$ stands for the measure of the domain $\Omega$.
 In particular,  for any $q\geq 1$
 $$\| e^{u}\|_{q}\le  C^{\frac1q} |\Omega|^{\frac1q} e^{{\frac{q}{ 16\pi}}\|u\|^2}\quad \forall  u\in{H}^1_0(\Omega).$$

\end{lemma}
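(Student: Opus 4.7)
The first inequality is the classical Moser--Trudinger inequality in two dimensions, so the plan is to reduce it to Moser's original 1971 argument rather than reprove it from scratch; the constant $4\pi$ in the exponent is the sharp one. My strategy would be: extend $u\in H^1_0(\Omega)$ by zero to a ball $B\supset\Omega$, then replace $u$ by its spherically symmetric decreasing rearrangement $u^*$, which is in $H^1_0(B)$ with $\|u^*\|\leq\|u\|$ and with $\int_B e^{\alpha (u^*)^2}\geq \int_\Omega e^{\alpha u^2}$. After the change of variable $r=R e^{-t/2}$ (where $R$ is the radius of $B$), radial $H^1_0(B)$-functions become functions $\psi(t)$ on $(0,\infty)$ with $\psi(0)=0$ and $\int_0^\infty \psi'(t)^2\,dt\leq \|u\|^2/(4\pi)$, and Moser's one-dimensional lemma yields $\int_0^\infty e^{\psi(t)^2/\|\psi'\|_2^2 - t}\,dt\leq C$. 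Multiplying through by $|B|$ and unwinding the substitution gives the desired $\int_\Omega e^{4\pi u^2/\|u\|^2}\leq C|\Omega|$. I would simply state that this is a well-known consequence of \cite{Moe,Tru} and not grind through the one-dimensional Moser lemma.

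For the second inequality, the plan is a one-line Young-type trick. For any $q\geq 1$ and any $u\in H^1_0(\Omega)$ with $\|u\|\neq 0$ (the zero case being trivial), apply the elementary inequality
\begin{equation*}
qu(y)\;\leq\;\frac{4\pi\,u(y)^2}{\|u\|^2}\;+\;\frac{q^{2}\|u\|^{2}}{16\pi}
\end{equation*}
which is just $2ab\leq a^2+b^2$ with $a=2\sqrt{\pi}\,u/\|u\|$ and $b=q\|u\|/(4\sqrt{\pi})$. Exponentiating and integrating gives
\begin{equation*}
\int_\Omega e^{qu}\,dy\;\leq\;e^{\frac{q^{2}\|u\|^{2}}{16\pi}}\int_\Omega e^{\frac{4\pi u^{2}}{\|u\|^{2}}}\,dy\;\leq\;C|\Omega|\,e^{\frac{q^{2}\|u\|^{2}}{16\pi}},
\end{equation*}
by the first inequality, and taking the $q$-th root yields exactly $\|e^u\|_q\leq C^{1/q}|\Omega|^{1/q}e^{\frac{q}{16\pi}\|u\|^2}$.

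There is essentially no obstacle here: the only nontrivial content is the sharp Moser--Trudinger inequality itself, which is cited, and the passage to the $L^q$ bound is a direct application of Young's inequality with the parameter $\alpha=8\pi/\|u\|^2$ chosen precisely so that the quadratic exponent on the right of Young matches $4\pi u^2/\|u\|^2$. The only point to be a bit careful about is the case $\|u\|=0$, which is handled separately (then $u\equiv0$ and both sides reduce to $|\Omega|^{1/q}$), and the fact that the constant $C$ is independent of $\Omega$, which is already built into Moser's formulation after the rearrangement step, since $R$ enters only through $|B|=\pi R^2$ and is absorbed into $|\Omega|$.
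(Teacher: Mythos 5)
Your proposal is correct and is essentially what the paper does: Lemma \ref{tmt} is simply recalled with the citations \cite{Moe,Tru} and no proof is given, so citing the sharp Moser--Trudinger inequality for the first bound is exactly the paper's treatment, and your Young-inequality step $qu\le \frac{4\pi u^2}{\|u\|^2}+\frac{q^2\|u\|^2}{16\pi}$ followed by exponentiation, integration and taking the $q$-th root is the standard (and correct) derivation of the $L^q$ consequence. The only cosmetic remark is that in your sketch of Moser's argument the symmetrization should land in a ball of measure $|\Omega|$ (the Schwarz rearrangement of the zero-extension), not an arbitrary containing ball $B$, otherwise one would only get $C|B|$; since you cite the result rather than reprove it, this does not affect the proof.
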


As commented in the introduction, our proof uses the singular
perturbation methods. For that, the nondegeneracy of the functions
that we use to build our approximating solution is essential. Next
proposition is devoted to the nondegeneracy of the finite mass
solutions of the Liouville equation (see \cite{bapa} for the proof).

\begin{prop}
\label{esposito} Assume that $\xi\in\R^2$ and $\phi:\R^2\to\R$  solves the problem
\begin{equation}\label{l1}
-\Delta \phi =\frac{8}{(1+|z-\xi|^2)^2}\phi\;\;
\hbox{in}\ \rr^2,\quad \int_{\R^2}|\nabla
\phi(z)|^2dz<+\infty.
\end{equation}
 Then there exist $c_0,\,c_1,\, c_2\in\R$
such that
$$\phi(z)=c_0  Z_0+ c_1Z_1 +c_2Z_2,$$
$$Z_0(z):   = {1-|z-\xi|^2\over 1+|z-\xi|^2} ,\ \; \;Z_1(z):={ z_1-\xi_1\over  1+|z-\xi|^2}
,\ \;\;Z_2(z):={ z_2-\xi_2 \over  1+|z-\xi|^2}.
$$ \end{prop}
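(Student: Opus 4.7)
The plan is to reduce the linearised equation to an eigenvalue problem on the round sphere $S^2$ via inverse stereographic projection and exploit the fact that the eigenvalue $2$ of $-\Delta_{S^2}$ has a three-dimensional eigenspace spanned by the Cartesian coordinates of $\R^3$. By the translation invariance of the Laplacian I may assume $\xi=0$ throughout.

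First I would let $\Pi:S^2\setminus\{N\}\to \R^2$ be the stereographic projection from the north pole $N=(0,0,1)$ and set $\tilde\phi:=\phi\circ\Pi$. In stereographic coordinates the round metric reads $g_{S^2}=\tfrac{4}{(1+|z|^2)^2}\,g_{\R^2}$, hence $\Delta_{S^2}=\tfrac{(1+|z|^2)^2}{4}\,\Delta_{\R^2}$. Substituting into $-\Delta\phi=\tfrac{8}{(1+|z|^2)^2}\phi$ and multiplying by $(1+|z|^2)^2/4$ yields, on $S^2\setminus\{N\}$,
$$-\Delta_{S^2}\tilde\phi=2\tilde\phi.$$

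The main obstacle is the removable-singularity step at $N$. Here I would exploit the conformal invariance of the Dirichlet energy in dimension two:
$$\int_{S^2\setminus\{N\}}|\nabla_{S^2}\tilde\phi|^2\,d\sigma=\int_{\R^2}|\nabla\phi|^2\,dz<+\infty,$$
so that $\tilde\phi\in H^1(S^2\setminus\{N\})$. A standard capacity argument (a single point has zero $H^1$-capacity on a $2$-dimensional manifold) then upgrades this to $\tilde\phi\in H^1(S^2)$, after which elliptic regularity makes it smooth globally. This is precisely the point where the finite-energy hypothesis is used in an essential way: the ODE reduction in polar coordinates produces a second radial solution with logarithmic growth, and that spurious branch is ruled out only because it fails to belong to $H^1$ near $N$.

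Once $\tilde\phi\in C^\infty(S^2)$ solves $-\Delta_{S^2}\tilde\phi=2\tilde\phi$ on the whole sphere, the classical spectral decomposition of $-\Delta_{S^2}$ (eigenvalues $k(k+1)$ for $k\in\N\cup\{0\}$, with eigenspaces equal to the spaces of spherical harmonics of degree $k$) forces $\tilde\phi$ to lie in the three-dimensional space of degree-$1$ spherical harmonics, so $\tilde\phi=a_1\xi_1+a_2\xi_2+a_3\xi_3$ for some $a_i\in\R$. Pulling back via $\Pi^{-1}(z)=\bigl(\tfrac{2z_1}{1+|z|^2},\tfrac{2z_2}{1+|z|^2},\tfrac{|z|^2-1}{1+|z|^2}\bigr)$ and reabsorbing constants delivers $\phi=c_0Z_0+c_1Z_1+c_2Z_2$, after restoring the translation by $\xi$.
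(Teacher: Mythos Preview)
The paper does not give its own proof of this proposition; it simply records the statement and cites Baraket--Pacard \cite{bapa}. Your stereographic-projection argument is correct and is one of the standard routes to this nondegeneracy result, so you have in fact supplied more than the paper does.

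One technical point deserves to be made explicit. Conformal invariance of the Dirichlet integral gives $\nabla_{S^2}\tilde\phi\in L^2(S^2)$, but to say $\tilde\phi\in H^1(S^2\setminus\{N\})$ and then invoke the zero-capacity argument you also need $\tilde\phi\in L^2(S^2)$, i.e.
\[
\int_{\R^2}\frac{\phi(z)^2}{(1+|z|^2)^2}\,dz<\infty.
\]
This is true, but it is not part of the capacity step; it follows from the growth control that $\nabla\phi\in L^2(\R^2)$ imposes. For instance, after decomposing in angular Fourier modes, the nonzero modes contribute $\int_0^\infty k^2|\phi_k|^2\,r^{-1}\,dr<\infty$, which dominates the weighted integral at infinity, while for the zero mode Cauchy--Schwarz along rays gives $|\phi_0(r)-\phi_0(1)|\le C\|\nabla\phi\|_{L^2}\sqrt{|\log r|}$, and $(\log r)/(1+r^2)^2$ is integrable against $r\,dr$. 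With this verified, your removable-singularity step and the identification of $\tilde\phi$ with a degree-one spherical harmonic go through exactly as written.
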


\section{Analysis of the linearized operator}
According to Proposition \ref{esposito}, by the change of variable $x=\de z$, we immediately get that  all solutions $\psi$  of
$$
-\Delta \psi= {8\de^{2}\over (\de^{2}+|x-b|^2)^2}\psi =e^{W_{\la}}\psi\quad \hbox{in}\quad \rr^2,\qquad \int_{\R^2}|\nabla
\phi(x)|^2dx<+\infty.$$
are linear combinations of the functions
$$Z^0_{\delta,b}(x)={\delta^{2}-|x-b|^2\over \delta^{2}+|x-b|^2},\ Z^1_{\delta,b}(x)=
{ \de(x_1-b_1)\over  \de^{2}+|x-b|^2},\ Z^2_{\delta,b}(x)=
{ \de (x_2-b_2)\over  \de^{2}+|x-b|^2}
.$$
We introduce the projections $PZ^j_{\delta,b}$ onto $H^1_0( B_1 )$.  It is immediate that
 \begin{equation}\label{pz0}
PZ^0_{\delta,b}(x)=Z^0_{\delta,b}(x)+1+ O\(\de^{2}\)={ 2 \de^{2} \over \de^{2}+|x-b|^{2} }+ O(\delta^{2})
\end{equation}
and
  \begin{equation}\label{pzi}
PZ^j_{\delta,b}(x)=Z^j _{\delta,b}(x) +O(\de)\hbox{ for }j=1,2\end{equation}
uniformly with respect to $x\in\overline B_1 $ and $b> 0$ in a small neighborhood of $0.$

We agree that $Z_\la^j:=Z_{\delta,b}^j$ for any $j=0,1,2$, where $\delta$ is defined in terms of  $\la$ and $b$ according to \eqref{delta}.
Let us consider the following linear problem: given
$ h\in H^1_{0}( B_1 )$,  find a function $\phi\in  H^1_{0}( B_1 )$  and  constant $c_1,c_2\in\R$ satisfying
\begin{equation}\label{lla2}
\left\{\begin{aligned}&-\Delta \phi   -\frac{\la}{4} V\big(x^\frac12\big) e^{P{W}_\la}\phi=\Delta h+\sum_{j=1,2}c_j Z^j_{\la} e^{W_{\la}}\\ &\into \nabla \phi\nabla PZ^j_{\la}=0\;\;j=1,2\end{aligned}\right..
\end{equation}

In order to solve problem \eqref{lla2}, we need to establish an a priori estimate. For the proof we refer to \cite{delkomu} (Proposition 3.1) or \cite{espogropi} (Proposition 3.1). 
\begin{prop}\label{linear2}
There exist $\lambda_0>0$ and $C>0$ such that for any $\la \in(0, \la_0)$, any $b$ in a small neighborhood of $0$
and   any $h\in  H^1_{0}( B_1 )$, if
$(\phi,c_1,c_2)\in  H^1_{0}( B_1 )\times \R^2$  solves \eqref{lla2}, then the following holds $$\|\phi\|    \leq C  |\log\de |    \|h\| .$$ \end{prop}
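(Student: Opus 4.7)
The plan is a contradiction argument of the type standard for Liouville linear theory, adapted to the setup in which the orthogonality is imposed only against the two translation modes $PZ^1_\la,PZ^2_\la$ (and \emph{not} against the dilation mode $PZ^0_\la$). Accordingly, suppose the conclusion fails: then there exist sequences $\la_n\to 0^+$, $b_n$ in a fixed neighborhood of $0$, $h_n\in H^1_0(B_1)$ and solutions $(\phi_n,c_{1,n},c_{2,n})$ of \eqref{lla2} with
\[
\|\phi_n\|=1,\qquad |\log\de_n|\,\|h_n\|\longrightarrow 0 .
\]
The first step is to control the Lagrange multipliers $c_{j,n}$. Testing \eqref{lla2} against $PZ^j_{\la_n}$ for $j=1,2$, using the orthogonality built into \eqref{lla2}, the identity $-\Delta Z^j_\la=e^{W_\la}Z^j_\la$, and the fact that the Gram matrix $\bigl(\int_{B_1}e^{W_{\la_n}}Z^i_{\la_n}PZ^j_{\la_n}\bigr)_{i,j=1,2}$ is, after a fixed rescaling, approximately a nonsingular diagonal matrix of order $1$, one derives $|c_{j,n}|=O(\|h_n\|)+o(1)\cdot\|\phi_n\|$, hence $c_{j,n}\to 0$.

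Next I would rescale: set $\tilde\phi_n(y):=\phi_n(\de_n y+b_n)$ on the expanding domain $\tilde\Omega_n:=\de_n^{-1}(B_1-b_n)$. Standard elliptic regularity together with $\|\tilde\phi_n\|_{H^1(\tilde\Omega_n)}=\|\phi_n\|=1$, the explicit form $\frac{\la_n}{4}V(x^{1/2})e^{PW_{\la_n}}\to \frac{8}{(1+|y|^2)^2}$ in $C^0_{loc}$, and the smallness of the right-hand side $\la_n$-wise, yield (up to a subsequence) $\tilde\phi_n\to\phi_\infty$ in $C^1_{loc}(\R^2)$, with $\phi_\infty$ bounded, solving the linearized Liouville equation \eqref{l1} centered at $\xi=0$, and of finite Dirichlet energy. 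By Proposition~\ref{esposito}, $\phi_\infty=\alpha_0 Z^0+\alpha_1 Z^1+\alpha_2 Z^2$. Passing the orthogonality conditions $\int_{B_1}\nabla\phi_n\cdot\nabla PZ^j_{\la_n}=0$ to the limit, using the asymptotics \eqref{pz0}--\eqref{pzi}, isolates the translation components and forces $\alpha_1=\alpha_2=0$; thus $\phi_\infty=\alpha_0 Z^0$.

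The delicate step, and the one that produces the factor $|\log\de|$, is to pin down $\alpha_0=0$. Since $Z^0(y)\to -1$ as $|y|\to\infty$, the rescaled $\phi_n$ is asymptotically close to the constant $-\alpha_0$ outside any fixed compact set, while on $\partial B_1$ the function $\phi_n$ vanishes. The transition from this constant value to zero on the boundary is governed by a harmonic function of the form $\alpha_0\bigl(1-2\pi G(\cdot,b_n)/|\log\de_n|\bigr)$ in the outer region; this is precisely the mechanism that amplifies the estimate by $|\log\de_n|$, and conversely, testing the equation against the projected comparison function $PZ^0_{\la_n}$ (whose expansion \eqref{pz0} contains the constant $1$) and using the bound on the $c_{j,n}$ gives an identity of the type
\[
\alpha_0 \cdot c_* \;=\; O\bigl(|\log\de_n|\,\|h_n\|\bigr)+o(1),
\]
for some nonzero constant $c_*$, forcing $\alpha_0=0$. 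This is the main obstacle of the proof and the reason why full solvability fails without orthogonality against $PZ^0_\la$. With $\phi_\infty\equiv 0$ in hand, a standard splitting argument concludes: on a ball $B_{R\de_n}(b_n)$ the rescaled $L^2$-norm of $\nabla\phi_n$ tends to $0$ from the $C^1_{loc}$ convergence, while outside $B_{R\de_n}(b_n)$ the coefficient $\frac{\la_n}{4}V(x^{1/2})e^{PW_{\la_n}}$ is small enough that testing the equation against $\phi_n$ itself gives $\|\phi_n\|^2_{H^1(B_1\setminus B_{R\de_n}(b_n))}=o(1)+O(\|h_n\|\|\phi_n\|)$. Combining the two regions contradicts $\|\phi_n\|=1$ and completes the proof.
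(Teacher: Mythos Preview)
The paper does not supply its own proof of this proposition: it simply states ``For the proof we refer to \cite{delkomu} (Proposition~3.1) or \cite{espogropi} (Proposition~3.1).'' Your sketch reproduces precisely the contradiction/blow-up strategy of those references --- normalizing $\|\phi_n\|=1$, bounding the multipliers $c_{j,n}$ by testing with $PZ^j_{\la_n}$, rescaling to obtain a bounded solution of the linearized Liouville equation, eliminating the translation components via the orthogonality, then eliminating the dilation component by an outer-region/Green-function matching, and concluding with an inner/outer energy splitting. So the approach is the same as in the cited literature, and there is nothing to compare.

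One point deserves more care in your write-up. The step ``testing against $PZ^0_{\la_n}$ gives $\alpha_0\cdot c_*=O(|\log\de_n|\,\|h_n\|)+o(1)$ with $c_*\neq 0$'' is not quite right as stated: the natural candidate for $c_*$, namely $\int_{\R^2}\frac{8}{(1+|y|^2)^2}Z^0(y)\,dy$, vanishes, so a direct $PZ^0$-test does not isolate $\alpha_0$. In \cite{delkomu,espogropi} the elimination of $\alpha_0$ proceeds instead through the outer analysis you allude to just before --- identifying the weak $H^1_0$-limit of $\phi_n$ as a multiple of $G(\cdot,b_0)$ and matching its logarithmic singularity with the value $-\alpha_0$ carried by the inner limit at infinity; it is this matching, together with the factor $|\log\de_n|^{-1}$ in the Green representation, that forces $\alpha_0=0$ and explains the $|\log\de|$ loss. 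Your paragraph mixes the two mechanisms; if you carry out the argument in full, rely on the Green-function matching (or, equivalently, test against $PW_{\la_n}$ or a suitable $\log$-type function) rather than on $PZ^0_{\la_n}$ alone.
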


 For any $ p>1,$ let \beq\label{istar}i^*_{p}:L^{p}( B_1 )\to H^1_{0}( B_1 )\eeq be the
adjoint operator of the embedding
$i_{p}:H^1_{0}( B_1 )\hookrightarrow L^{p\over p-1 }( B_1 ),$ i.e.
$u=i^*_{p}(v)$ if and only if $-\Delta u=v$ in $ B_1 ,$ $u=0$ on
$\partial B_1 .$ We point out that $i^*_{p}$ is a continuous
mapping, namely
\begin{equation}
\label{isp} \|i^*_{p}(v)\| \le  c_p \|v\|_{p}, \ \hbox{for any} \ v\in L^{p}( B_1 ),
\end{equation}
for some constant $c_{p}$ which depends on  $p.$
Next let us set
$$      {K }:=\hbox{span}\left\{PZ^1_{\la}, \,PZ_\la^2\right\}$$
and
$$      {K ^\perp }:= \left\{\phi\in H^1_{0}( B_1 )\ :\ \into \nabla \phi \nabla PZ^j_{\la}dx=0\;\;j=1,2\right\} $$
and  denote by
$$     \Pi : H^1_{0}( B_1 )\to       {K },\qquad      {\Pi ^\perp}: H^1_{0}( B_1 )\to       {K ^\perp }$$
the corresponding projections.
Let $      L: K^\perp \to K^\perp$  be the
linear operator defined by
\beq\label{elle}
      L(     \phi):= \frac14 \Pi^{\perp}\Big(   {i^*_{p}}\big(       \la V\big(x^\frac12\big)e^{PW_{\la}}     \phi \big)\Big) - \phi.
\eeq

Notice that problem \eqref{lla2} reduces to $$L(\phi)=\Pi^\perp h, \quad \phi\in K^\perp.$$

 As a consequence of Proposition \ref{linear2} we derive the invertibility of $L$.

\begin{prop}\label{ex} For any $p>1$ there exist $\lambda_0>0$ and $C>0$ such that for any $\la \in(0, \la_0)$, any $b$ in a small neighborhood of $0$ and
 any $h\in K^\perp$  there is a unique solution $ \phi\in K^\perp$ to the problem $$L(\phi)=h.$$ In
particular, $L$ is invertible; moreover, $$\| L^{-1} \| \leq C
|\log \de |.$$

\end{prop}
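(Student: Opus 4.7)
The plan is to deduce invertibility of $L$ from the a priori bound of Proposition~\ref{linear2} via a standard Fredholm alternative argument. The first step I would carry out is to recast the equation $L(\phi)=h$ into the Lagrange-multiplier formulation \eqref{lla2}. Setting $u:=\frac{1}{4}i^*_p\big(\la V(x^{1/2})e^{PW_{\la}}\phi\big)\in H^1_0(B_1)$, the identity $L(\phi)=h$ is equivalent to $u-\phi-h\in K=\mathrm{span}\{PZ^1_{\la},PZ^2_{\la}\}$, that is, $u-\phi-h=c_1 PZ^1_{\la}+c_2 PZ^2_{\la}$ for some real constants $c_1,c_2$. Applying $-\Delta$ and using $-\Delta PZ^j_{\la}=-\Delta Z^j_{\la}=e^{W_{\la}}Z^j_{\la}$ inside $B_1$, this reduces to
\begin{equation*}
-\Delta\phi-\tfrac{\la}{4}V\big(x^{1/2}\big)e^{PW_{\la}}\phi=\Delta h-c_1 e^{W_{\la}}Z^1_{\la}-c_2 e^{W_{\la}}Z^2_{\la},
\end{equation*}
which, coupled with $\phi\in K^\perp$, is exactly system \eqref{lla2} with multipliers $-c_j$; the two formulations are equivalent.

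Second, I would verify that $L$ is a compact perturbation of the identity, so that the Fredholm alternative applies. Writing $L=T_{\la}-\mathrm{Id}$ with $T_{\la}\phi:=\frac{1}{4}\Pi^{\perp}\big(i^*_p(\la V(x^{1/2})e^{PW_{\la}}\phi)\big)$, Lemma~\ref{tmt} combined with the expansion \eqref{pro-exp1} shows that $V(x^{1/2})e^{PW_{\la}}\in L^q(B_1)$ for every $q\geq 1$. Hence the multiplication map $\phi\mapsto \la V(x^{1/2})e^{PW_{\la}}\phi$ is bounded from $H^1_0(B_1)$ into $L^p(B_1)$, and composition with the operator $i^*_p\colon L^p(B_1)\to H^1_0(B_1)$ (compact by the Rellich--Kondrachov theorem in dimension two) makes $T_{\la}$ compact. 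Consequently $L$ is Fredholm of index zero on $K^\perp$, and invertibility reduces to injectivity.

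Finally, I would extract both injectivity and the norm bound from Proposition~\ref{linear2} through the reformulation of step one. If $L(\phi)=0$ with $\phi\in K^\perp$, then $\phi$ solves \eqref{lla2} with $h=0$, hence $\|\phi\|\leq C|\log\de|\cdot 0=0$ and $\phi=0$; thus $L$ is injective and therefore invertible. Given $h\in K^\perp$ and $\phi:=L^{-1}h$, step one produces constants $c_1,c_2$ such that $(\phi,-c_1,-c_2)$ solves \eqref{lla2} with datum $h$, so Proposition~\ref{linear2} delivers $\|L^{-1}h\|=\|\phi\|\leq C|\log\de|\|h\|$, from which the claimed bound on $\|L^{-1}\|$ follows. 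The only mild subtlety I anticipate is the careful bookkeeping between the projection-based definition of $L$ and the multiplier form of \eqref{lla2}; all the genuine analytic work has already been absorbed into Proposition~\ref{linear2}, so there is no serious obstacle beyond that translation.
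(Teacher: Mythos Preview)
Your proposal is correct and follows essentially the same strategy as the paper: reduce to the Lagrange-multiplier formulation \eqref{lla2}, invoke compactness to apply the Fredholm alternative, and read off both injectivity and the norm bound from Proposition~\ref{linear2}. You supply considerably more detail than the paper (an explicit translation between $L(\phi)=h$ and \eqref{lla2}, and a justification of compactness via Moser--Trudinger and Rellich--Kondrachov), but the architecture is identical.
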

\begin{proof}  Observe that the operator $\phi\mapsto \Pi^\perp\big( {i^*_{p}}(       \la V\big(x^\frac12\big)e^{PW_{\la}}     \phi )\big)$ is a compact operator in $K^\perp$.
   Let us consider the case $h=0$, and take $\phi\in K^\perp$ with $L(\phi)=0$. In other words,  $\phi$ solves
the system \eqref{lla2} with $h=0$ for some $c_1,c_2\in\R$. Proposition \ref{linear2} implies $\phi\equiv 0$. Then, Fredholm's alternative implies the existence and uniqueness result.

Once we have existence, the norm estimate follows directly from Proposition \ref{linear2}.
\end{proof}

\section{Estimate of the error term}

The goal of this section is to provide an estimate of the error up to which the approximate
solution $PW_\la$ solves problem \eqref{proreg1}. First of all, we perform the following estimates.

\begin{lemma}\label{aux00}   Let
 $\gamma=0,1,2$ and $p>1$ be fixed. The following holds:\beq\label{aux00esti1}
\| |x-b|^\gamma e^{W_{\la}}\|_p
\leq C\de^{\gamma}\de^{-2\frac{p-1}{p}} ,\quad \| |x-b|^\gamma \la e^{PW_{\la}}\|_p
\leq C\de^{\gamma}\de^{-2\frac{p-1}{p}} \eeq
uniformly for $b$ in a small neighborhood of $0$.
\end{lemma}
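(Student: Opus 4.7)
The plan is a direct computation based on the explicit form of the bubble and the expansion \eqref{pro-exp1}, treating the two estimates in sequence and reducing the second to the first.

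\textbf{Step 1 (first estimate).} Since $e^{W_\la}=\dfrac{8\de^{2}}{(\de^{2}+|x-b|^2)^2}$, I would write
\[
\| |x-b|^\gamma e^{W_{\la}}\|_p^p = 8^p\de^{2p}\into \frac{|x-b|^{\gamma p}}{(\de^{2}+|x-b|^2)^{2p}}\,dx
\]
and perform the change of variable $y=(x-b)/\de$, obtaining
\[
\into \frac{|x-b|^{\gamma p}}{(\de^{2}+|x-b|^2)^{2p}}\,dx\ \le\ \de^{\gamma p-4p+2}\int_{\R^2}\frac{|y|^{\gamma p}}{(1+|y|^2)^{2p}}\,dy .
\]
The integral on the right is finite for every $p>1$ and $\gamma\in\{0,1,2\}$, because near the origin $|y|^{\gamma p}$ is integrable and at infinity the integrand decays like $|y|^{(\gamma-4)p}$ with $(\gamma-4)p+2\le -2p+2<0$. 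Combining gives
\[
\| |x-b|^\gamma e^{W_{\la}}\|_p\ \le\ C\,\de^{\gamma+\frac{2}{p}-2}\ =\ C\,\de^\gamma\,\de^{-2\frac{p-1}{p}},
\]
which is exactly the first claim.

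\textbf{Step 2 (reduction for the second estimate).} From \eqref{pro-exp1} one has
\[
PW_\la = W_\la - \log(8\de^2) + 8\pi H(x,b) + 2\de^2 + O(\de^2|b|)+O(\de^4),
\]
so that
\[
e^{PW_\la}\ =\ \frac{e^{8\pi H(x,b)}}{8\de^2}\,e^{W_\la}\,\bigl(1+O(\de^2)\bigr).
\]
Multiplying by $\la$ and using the definition \eqref{delta}, namely $\la/(8\de^2)=4/\bigl(V(b^{1/2})e^{8\pi H(b,b)}\bigr)$, gives a constant factor that is bounded (uniformly in $b$ near $0$). Since $H(x,b)$ is bounded on $\overline B_1$ for $b$ in a small neighborhood of $0$ (formula \eqref{accor}), we conclude
\[
\la\,e^{PW_\la}\ \le\ C\,e^{W_\la}
\]
uniformly, and the second estimate in \eqref{aux00esti1} follows at once by applying Step~1.

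\textbf{Main obstacle.} There is no serious obstacle: the statement is essentially a routine $L^p$--bound on the standard bubble and its projection, and everything reduces to the convergence of $\int_{\R^2}|y|^{\gamma p}/(1+|y|^2)^{2p}\,dy$. The only minor point to watch is the threshold $p>1$ when $\gamma=2$, where the integral over all of $\R^2$ is just barely convergent; for $p=1$ one would pick up a logarithmic loss, which is why the hypothesis $p>1$ in the statement is sharp.
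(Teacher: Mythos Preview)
Your proof is correct and follows essentially the same approach as the paper: both compute the $L^p$ norm of $|x-b|^\gamma e^{W_\la}$ by inserting the explicit bubble, rescaling $z=(x-b)/\de$, and invoking convergence of $\int_{\R^2}|z|^{\gamma p}/(1+|z|^2)^{2p}\,dz$ for $p>1$; then both reduce the second estimate to the first by using \eqref{pro-exp1} and the choice of $\de$ in \eqref{delta} to get $\la e^{PW_\la}\le C e^{W_\la}$. Your added remark on the sharpness of $p>1$ when $\gamma=2$ is a nice extra that the paper does not spell out.
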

\begin{proof}
We compute $$\begin{aligned}\||x-b|^\gamma e^{W_{\la}}\|_p^p&=
8^p\de^{2 p}\into\frac{|x-b|^{\gamma p}}{(\de^{2}+|x-b|^2)^{2p}}dx
\leq 8^p\de^{ \gamma p-2(p-1)} \intr\frac{|z|^{\gamma p}}{(1+|z|^2)^{2p}}dz
.\end{aligned}$$
Taking into account that the last integral is finite for   $\gamma=0,1,2$ and $p>1$ 
we deduce the first part of  \eqref{aux00esti1}.
To prove the second part it is sufficient to observe that 
by \eqref{pro-exp1} and by the choice of $\delta$ in \eqref{delta} we derive
\beq\label{poi}\la e^{PW_{\la}}= \frac{\la}{8\de^{2}}e^{W_{\la}+O(1)}
=e^{W_{\la}}(1+O(1)).\eeq
\end{proof}
\begin{lemma}\label{crucru}      Assume that hypotheses ${\mathrm{(H1)-(H3)}}$ hold. There exists  $P :\R^2\to\R$ a homogeneous polynomial of degree 2 such that 
$$\begin{aligned}\frac{V(x^\frac12)}{V(b^\frac12)}&=1+ 2A_0b_1(x_1-b_1)+\frac{A_1}{2}\Big(b_1(x_2-b_2)+b_2(x_1-b_1)\Big)+\Big(A_0+\frac{A_2}{2}\Big)b_2(x_2-b_2)
\\ &\;\;\;\;+ D_0\Big((x_1-b_1)^3+3b_1^2(x_1-b_1)\Big)+\Big(\frac{D_1}{4}+\frac{D_3}{8}\Big)\Big((x_2-b_2)^3+3b_2^2(x_2-b_2)\Big)\\ &\;\;\;\;+\frac{D_1}{2}\Big((x_1-b_1)^2(x_2-b_2)+b_1^2(x_2-b_2)+2b_1b_2(x_1-b_1)\Big)
\\ &\;\;\;\;+\frac14\Big(3D_0-D_2\Big)\Big((x_1-b_1)(x_2-b_2)^2+b_2^2(x_1-b_1)+2b_1b_2(x_2-b_2)\Big)
\\ &\;\;\;\;+P(x-b)+O(|b||x-b|^2)+O(|b|^3|x-b|)
+O(|x-b|^{\frac72})+O(|b|^{\frac72})
\end{aligned}$$ uniformly for  $b$ in a small neighborhood of $0$.
\end{lemma}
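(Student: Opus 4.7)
The proof is a direct Taylor expansion; the content is bookkeeping rather than novel analysis. I outline the plan in three steps.

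First, I would rewrite $V(x^{1/2})$ as a function of $x$ modulo a high-order remainder. Setting $y = x^{1/2}$, the key identities are
$$y_1^2 - y_2^2 = x_1,\qquad 2y_1 y_2 = x_2,\qquad y_1^2 + y_2^2 = |x|.$$
Since $V$ in (H3) is even in $y$, every monomial of the degree-4 and degree-6 homogeneous parts of $V$ can be rewritten as a product of $y_1^2 - y_2^2$, $y_1 y_2$, and an even power of $y_1^2+y_2^2$, which makes the $|x|$-factors collapse and leaves a polynomial in $(x_1,x_2)$. For instance, $y_1^4 + y_2^4 = |x|^2 - x_2^2/2$, $y_1^3 y_2 - y_1 y_2^3 = x_1 x_2/2$, $y_1^6 - y_2^6 = x_1(|x|^2 - x_2^2/4)$, and so on. The $O(|y|^7)$ tail becomes $O(|x|^{7/2})$. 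Thus
$$V(x^{1/2}) = 1 + Q_2(x) + Q_3(x) + O(|x|^{7/2}),$$
where $Q_2$ and $Q_3$ are homogeneous polynomials of degrees $2$ and $3$ with coefficients read off from $A_0,A_1,A_2$ and $D_0,\ldots,D_3$ respectively.

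Second, since $Q_k$ is homogeneous of degree $k$, its Taylor expansion at $b$ terminates:
$$Q_2(x) = Q_2(b) + \nabla Q_2(b)\cdot(x-b) + Q_2(x-b),$$
$$Q_3(x) = Q_3(b) + \nabla Q_3(b)\cdot(x-b) + \tfrac{1}{2}D^2 Q_3(b)[(x-b)^{\otimes 2}] + Q_3(x-b).$$
Since $V(b^{1/2}) = 1 + Q_2(b) + Q_3(b) + O(|b|^{7/2}) = 1 + O(|b|^2)$, we have
$$\frac{1}{V(b^{1/2})} = 1 - Q_2(b) - Q_3(b) + O(|b|^{7/2})$$
(higher powers of $Q_2(b)+Q_3(b)$ are $O(|b|^4)=O(|b|^{7/2})$ for small $b$). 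Multiplying and regrouping, the main terms that survive are
$$1 + \nabla Q_2(b)\cdot(x-b) + Q_2(x-b) + \nabla Q_3(b)\cdot(x-b) + Q_3(x-b),$$
which match precisely the explicit terms in the statement, with $P(x-b) := Q_2(x-b)$ being the required homogeneous polynomial of degree $2$.

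Third, everything else is collected into the error. The remaining contributions are $\tfrac12 D^2 Q_3(b)[(x-b)^{\otimes 2}] = O(|b||x-b|^2)$, the cross terms $-Q_2(b)(Q_2(x)-Q_2(b)) = O(|b|^3|x-b|) + O(|b|^2|x-b|^2)$, $-Q_3(b)(Q_2(x)+Q_3(x)) = O(|b|^5)+O(|b|^3|x-b|^2)$, $-Q_2(b)Q_3(x) = O(|b|^5)+O(|b|^2|x-b|^3)$, and the tails $O(|x|^{7/2})$, $O(|b|^{7/2})$. Using $|x|^{7/2}\lesssim |b|^{7/2}+|x-b|^{7/2}$ and the obvious absorptions (e.g.\ $|b|^2|x-b|^2 \leq |b|\cdot|x-b|^2$ and $|b|^2|x-b|^3\leq |b||x-b|^2$ when $|b|,|x-b|\leq 1$), every residual term fits into one of the four declared error classes $O(|b||x-b|^2)+O(|b|^3|x-b|)+O(|x-b|^{7/2})+O(|b|^{7/2})$. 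Finally, writing out $\nabla Q_2(b)\cdot(x-b)$, $\nabla Q_3(b)\cdot(x-b)$, and $Q_3(x-b)$ coordinate-by-coordinate recovers the exact coefficients stated in the lemma.

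The whole argument is elementary; the only real obstacle is the careful bookkeeping needed to verify that every term dropped from the explicit expansion genuinely fits into one of the prescribed error classes, in particular reconciling the two natural remainder scales $|x|^{7/2}$ and $|b|^{7/2}$ with the mixed $O(|b||x-b|^2)$ and $O(|b|^3|x-b|)$ bounds.
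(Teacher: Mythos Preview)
Your proposal is correct and follows essentially the same three-step strategy as the paper: first express $V(x^{1/2})$ explicitly as $1+Q_2(x)+Q_3(x)+O(|x|^{7/2})$ with $Q_2,Q_3$ homogeneous of degrees $2,3$, then Taylor-expand the polynomial part around $b$, and finally divide by $V(b^{1/2})$ and sort the remaining contributions into the four declared error classes. The only difference is cosmetic: the paper derives $Q_2,Q_3$ by passing to polar coordinates and applying half-angle trigonometric identities, whereas you use the algebraic relations $y_1^2-y_2^2=x_1$, $2y_1y_2=x_2$, $y_1^2+y_2^2=|x|$ directly --- both routes land on the same $Q_2,Q_3$ and the subsequent bookkeeping is identical.
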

\begin{proof} Let us first consider a more general potential $V$ of the form 
$$\begin{aligned}V(x)&=1+
\sum_{j=0}^{4}A_j x_1^{4-j}  x_2^j+\sum_{j=0}^{6}D_jx_1^{6-j}x_2^j  +O(|x|^{7}),\quad A_j,D_j\in\R,
\end{aligned}$$
and, using the polar coodinates $x=\rho e^{{\rm i}\theta}=(\rho \cos\theta, \rho\sin\theta)$, we have
$$\begin{aligned}V(x^\frac12)&=1+
\rho^{2}\sum_{j=0}^{4}A_j\cos^{4-j}  \frac\theta2 \sin^j\frac\theta2+\rho^{3}\sum_{j=0}^{6}D_j\cos^{6-j} \frac\theta2 \sin^j \frac\theta2 +O(|x|^{\frac{7}{2}}).
\end{aligned}$$ 
Now we use standard trigonometric identities to obtain:
$$\cos^4\frac\theta2=
\frac{\sin^2\theta +2\cos\theta+2\cos^2\theta}{4},\qquad \sin^4\frac\theta2
= \frac{\sin^2\theta-2\cos\theta+2\cos^2\theta}{4} $$ $$\cos^6\frac\theta2
= \frac{1+4\cos^3\theta+3\cos\theta\sin^2\theta +3\cos^2\theta}{8}, \quad \sin^6\frac\theta2=
\frac{1-4\cos^3\theta-3\sin^2\theta\cos\theta+3\cos^2\theta }{8}$$
$$\cos \frac\theta2 \sin^3\frac\theta2=\sin\theta \frac{1-\cos\theta}{4},\qquad \sin \frac\theta2 \cos^3\frac\theta2=\sin\theta \frac{1+\cos\theta}{4}   ,\qquad \sin^2\frac\theta2\cos^2\frac\theta2=\frac14\sin^2\theta,$$
$$\cos^5\frac\theta2\sin\frac\theta2=
\sin\theta \frac{2\cos^2\theta +\sin^2\theta +2\cos\theta}{8} ,\qquad \sin^5\frac\theta2\cos\frac\theta2=
\sin\theta \frac{2\cos^2\theta+\sin^2\theta -2\cos\theta}{8},$$
$$\sin^3\frac\theta2\sin^3\frac\theta2= \frac18\sin^3\theta,\qquad \cos^2\frac\theta2\sin^4\frac\theta2=\sin^2\theta\frac{1-\cos\theta}{8},\qquad \sin^2\frac\theta2\cos^4\frac\theta2=\sin^2\theta\frac{1+\cos\theta}{8}.$$
According to  ${\mathrm{(H3)}}$  we get \beq\label{relat}A_0=A_4,\;\; A_1=-A_3,\;\; D_0=-D_6,\;\;D_1=D_5,\;\; D_4=-D_2,\eeq
so we derive
\beq\label{deep}\begin{aligned}V(x^\frac12)&= 1+A_0\Big(x_1^2+\frac{x_2^2}{2}\Big) +  \frac{A_1}{2}x_1x_2+\frac{A_2}{4}x_2^2\\ & \;\;\;+D_0\Big(x_1^3+\frac34 x_1x_2^2\Big)+D_1\Big(\frac{x_1^2x_2}{2}+\frac{x_2^3}{4}\Big) -\frac{D_2}{4}x_1x_2^2+\frac{D_3}{8}x_2^3
+ O(|x|^{3 +\frac{1}{2}})\\ &=
1+A_0x_1^2 +  \frac{A_1}{2}x_1x_2+\Big(\frac{A_0}{2}+\frac{A_2}{4}\Big)x_2^2\\ &\;\;\;+ D_0x_1^3+\frac{D_1}{2}x_1^2x_2+\frac14\Big(3D_0-D_2\Big)x_1x_2^2+\Big(\frac{D_1}{4}+\frac{D_3}{8}\Big)x_2^3+ O(|x|^{3 +\frac{1}{2}}).
\end{aligned}\eeq

Next observe that, setting $x_1=(x_1-b_1)+b_1$  and $x_2=(x_2-b_2)+b_2$ and making trivial computations we get   $$\begin{aligned}&A_0x_1^2 +  \frac{A_1}{2}x_1x_2+\Big(\frac{A_0}{2}+\frac{A_2}{4}\Big)x_2^2\\ &\;\;\;+ D_0x_1^3+\frac{D_1}{2}x_1^2x_2+\frac14\Big(3D_0-D_2\Big)x_1x_2^2+\Big(\frac{D_1}{4}+\frac{D_3}{8}\Big)x_2^3\\ &=2A_0b_1(x_1-b_1)+\frac{A_1}{2}\Big(b_1(x_2-b_2)+b_2(x_1-b_1)\Big)+\Big(A_0+\frac{A_2}{2}\Big)b_2(x_2-b_2)
\\ &\;\;\;+ D_0\Big((x_1-b_1)^3+3b_1^2(x_1-b_1)\Big)+\frac{D_1}{2}\Big((x_1-b_1)^2(x_2-b_2)+b_1^2(x_2-b_2)+2b_1b_2(x_1-b_1)\Big)
\\ &\;\;\;+\frac14\Big(3D_0-D_2\Big)\Big((x_1-b_1)(x_2-b_2)^2+b_2^2(x_1-b_1)+2b_1b_2(x_2-b_2)\Big)
\\ &\;\;\;+\Big(\frac{D_1}{4}+\frac{D_3}{8}\Big)\Big((x_2-b_2)^3+3b_2^2(x_2-b_2)\Big)
\\ &\;\;\;
+A_0b_1^2+\frac{A_1}{2}b_1b_2+\Big(\frac{A_0}{2}+\frac{A_2}{4}\Big)b_2^2+D_0b_1^3+ \frac{D_1}{2}b_1^2b_2+\frac14\Big(3D_0-D_2\Big)b_1b_2^2+\Big(\frac{D_1}{4}+\frac{D_3}{8}\Big)b_2^3\\ &\;\;\;+P(x-b)+O(|b||x-b|^2).\end{aligned}
$$
Finally, since by \eqref{deep} $$\begin{aligned}V(b^\frac12)=&\;\,1+ A_0b_1^2+\frac{A_1}{2}b_1b_2+\Big(\frac{A_0}{2}+\frac{A_2}{4}\Big)b_2^2\\ &+D_0b_1^3+ \frac{D_1}{2}b_1^2b_2+\frac14\Big(3D_0-D_2\Big)b_1b_2^2+\Big(\frac{D_1}{4}+\frac{D_3}{8}\Big)b_2^3
 +O(|b|^{3+\frac12})\end{aligned}$$ and, consequently, $\frac{1}{V(b^\frac12)}=1+O(|b|^2) 
$, substituting into \eqref{deep} we obtain the thesis. 

\end{proof}

\begin{rmk}\label{banale000} Let us observe that thanks to the symmetry of the coefficients  \eqref{relat} we obtain that $V(x^\frac12)$ turns out to be three times differentiable at $0$: 
indeed the choice of coefficients  implies that the two sums $\sum_{j=0}^{4}A_j\cos^{4-j}  \frac\theta2 \sin^j\frac\theta2$ and $\sum_{j=0}^{6}D_j\cos^{6-j} \frac\theta2 \sin^j \frac\theta2$ turn out to be polynomials in the variables $\cos\theta,$ $ \sin\theta$ of degree 2 and 3 respectively. 
\end{rmk}

Now we are in the position to provide the error estimate.

\begin{prop}\label{aux000}  Assume that hypotheses ${\mathrm{(H1)-(H2)-(H3)}}$ and \eqref{techni} hold
and define
$$\begin{aligned}{R}_\la:=&\frac{\la}{4}  V \big(x^{\frac12}\big) e^{PW_{\la}}
+\Delta PW_{\la}= \frac{\la}{4}  V \big(x^{\frac12}\big) e^{PW_{\la}}-e^{W_\la}.
\end{aligned}$$ Then  the following holds 
\beq\label{stimarafff}\begin{aligned}R_\la=&\;2\de^2e^{W_\la}+ D_0e^{W_\la}\Big((x_1-b_1)^3+3b_1^2(x_1-b_1)\Big)\\ &+\frac{D_1}{2}e^{W_\la}\Big((x_1-b_1)^2(x_2-b_2)+b_1^2(x_2-b_2)+2b_1b_2(x_1-b_1)\Big)
\\ &+\frac14\Big(3D_0-D_2\Big)e^{W_\la}\Big((x_1-b_1)(x_2-b_2)^2+b_2^2(x_1-b_1)+2b_1b_2(x_2-b_2)\Big)
\\ &+\Big(\frac{B_1}{4}+\frac{D_3}{8}\Big)e^{W_\la}\Big((x_2-b_2)^3+3b_2^2(x_2-b_2)\Big)
+P(x-b)e^{W_\la}\\ &+O(\de^2|x-b|)e^{W_\la}+ O(|b||x-b|^2)e^{W_\la}+O(|b|^3|x-b|)e^{W_\la}
+O(|x-b|^{\frac72})e^{W_\la}\\ &+O(|b|^{\frac72})e^{W_\la}
+O(\de^2|b|)e^{W_\la}+O(\de^4)e^{W_\la}\end{aligned}\eeq 
uniformly for  $b$ in a small neighborhood of $0$.   
Moreover  for any $p>1$
$$\|R_\la\|_p\leq C (\de^2+|b|^3)\de^{-2\frac{p-1}{p}} 
$$ 
uniformly for  $b$ in a small neighborhood of $0$.

\end{prop}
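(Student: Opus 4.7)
The plan is to massage $R_\la = \frac{\la}{4} V(x^{1/2}) e^{PW_\la} - e^{W_\la}$ into a sum of explicit terms whose $L^p$-norms are individually controlled by Lemma \ref{aux00}. Substituting the expansion \eqref{pro-exp1} for $PW_\la$ and the defining relation \eqref{delta} for $\de^2$ rewrites
$$R_\la \,=\, e^{W_\la}\left[\frac{V(x^{1/2})}{V(b^{1/2})}\; e^{8\pi(H(x,b)-H(b,b))}\bigl(1 + 2\de^2 + O(\de^2|b|) + O(\de^4)\bigr) \,-\, 1\right],$$
since the exponential factor $\la/(32\de^2)$ coming from \eqref{delta} exactly kills $e^{-\log(8\de^2)}$ and leaves behind $1/(V(b^{1/2})e^{8\pi H(b,b)})$.

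I would then Taylor-expand each of the two non-trivial factors around $x=b$. For $V(x^{1/2})/V(b^{1/2})$, Lemma \ref{crucru} gives the required expansion; under hypothesis \eqref{techni} the linear-in-$(x-b)$ coefficient reduces to $4b\cdot(x-b)$, since $2A_0 = 4$, $A_1 = 0$ and $A_0 + A_2/2 = 4$. For the Robin factor, the explicit formula \eqref{accor} yields $e^{8\pi H(x,b)} = (|x|^2|b|^2 - 2b\cdot x + 1)^2$ and $e^{8\pi H(b,b)} = (1-|b|^2)^4$; a direct Taylor expansion of the logarithm produces
$$e^{8\pi(H(x,b) - H(b,b))} \,=\, 1 - 4b\cdot(x-b) + O(|b|^3|x-b|) + O(|b|^2|x-b|^2)$$
uniformly for $|b|$ small.

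Multiplying the two expansions, the opposite linear coefficients $\pm 4b\cdot(x-b)$ cancel --- this is the precise role of hypothesis \eqref{techni} --- so the bracketed quantity reduces to $2\de^2$ plus the four cubic groupings of Lemma \ref{crucru} (the terms involving $D_0,D_1,D_2,D_3$), plus $P(x-b)$, plus higher-order remainders. Multiplying back by $e^{W_\la}$ and collecting all cross-products (such as $2\de^2\cdot$ cubic $= O(\de^2|x-b|)\cdot$something, or $4b\cdot(x-b)\cdot O(|b|^2|x-b|^2)$, etc.) into the seven error categories listed, one recovers exactly the expansion \eqref{stimarafff}.

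For the $L^p$ estimate, I would bound every term in \eqref{stimarafff} using Lemma \ref{aux00}; the terms $|x-b|^3 e^{W_\la}$ and $|x-b|^{7/2} e^{W_\la}$ formally fall outside the range $\gamma \in \{0,1,2\}$, but are handled by an analogous scaling computation that exploits the boundedness of $B_1$ to control the resulting weighted integral. Combined with elementary inequalities (such as $|b|^2\de \leq |b|^3 + \de^3 \leq |b|^3 + \de^2$, together with $|b|, \de \leq 1$), each contribution is bounded by $C(\de^2+|b|^3)\de^{-2(p-1)/p}$, yielding the claimed estimate. The main obstacle is purely organizational: tracking which cross-product from the several Taylor products feeds into which of the listed error terms. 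The essential analytic point --- and the reason hypothesis \eqref{techni} appears --- is the cancellation of the first-order terms in $(x-b)$, without which $R_\la$ would carry an $O(|b|)e^{W_\la}$ contribution whose $L^p$-norm would be of order $|b|\de^{-2(p-1)/p}$, too large for the subsequent fixed-point argument to close.
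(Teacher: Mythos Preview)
Your proposal is correct and follows essentially the same approach as the paper: factor out $e^{W_\la}$ via \eqref{pro-exp1} and \eqref{delta}, expand the Robin factor $e^{8\pi(H(x,b)-H(b,b))}$ from the explicit formula \eqref{accor}, insert the expansion of Lemma~\ref{crucru}, and observe that \eqref{techni} forces the linear-in-$(x-b)$ terms to cancel. The only cosmetic difference is in the $L^p$ bound: the paper first coarsens \eqref{stimarafff} to $R_\la = e^{W_\la}\bigl(O(\de^2)+O(|b|^2|x-b|)+O(|x-b|^2)+O(|b|^{7/2})\bigr)$ so that Lemma~\ref{aux00} applies directly with $\gamma\le 2$, whereas you propose extending the scaling computation to $\gamma=3,\tfrac72$; both routes work.
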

\begin{proof} 
By \eqref{pro-exp1} and the choice of $\delta$ in \eqref{delta} we derive
\beq\label{proprop}\begin{aligned}\frac{\la}{4} V(x^\frac12)e^{PW_{\la}}&= \frac{\la}{32\de^{2}} V (x^\frac12)e^{W_{\la}+8\pi H (x,b)+2\de^2+O(\de^2|b|)+O(\de^4)} 
\\ &
=\frac{V(x^\frac12)}{ V (b^\frac12)}e^{W_{\la}}e^{8\pi (H(x, b)-H(b,b))+2\de^2+O(\de^2|b|)+O(\de^4)} 
\\ &=\frac{V(x^\frac12)}{ V (b^\frac12)}e^{W_{\la}}e^{8\pi (H(x, b)-H(b,b))}\big(1+2\de^2+O(\de^2|b|)+O(\de^4)\big).
\end{aligned}\eeq
Using the expression of $H$ given in \eqref{accor} we compute 
$$\begin{aligned}H(x,b)&=\frac{1}{4\pi}\log \Big(1+|x|^2|b|^2-2b_1x_1-2b_2x_2\Big)\\ &=\frac{1}{4\pi}\log \Big(1+|x-b|^2|b|^2+|b|^4-2|b|^2-2b_1(x_1-b_1)-2b_2(x_2-b_2)+O(|b|^3|x-b|)  \Big) \end{aligned}$$
by which
$$\begin{aligned}&e^{8\pi (H(x, b)-H(b,b))}\\&= \frac{(1+|x|^2|b|^2-2b_1x_1-2b_2x_2)^2}{(1-|b|^2)^4}
\\ &=\frac{\big(1+|x-b|^2|b|^2+|b|^4-2b_1(x_1-b_1)-2b_2(x_2-b_2)-2|b|^2+O(|b|^3|x-b|)\big)^2}{(1-|b|^2)^4}
\\ &=\bigg(1+\frac{|x-b|^2|b|^2-2b_1(x_1-b_1)-2b_2(x_2-b_2)+O(|b|^3|x-b|)}{(1-|b|^2)^2}
\bigg)^2
\\ &= \Big(1-2b_1(x_1-b_1)-2b_2(x_2-b_2)+O(|b|^3|x-b|)+O(|b|^2|x-b|^2) 
\Big)^2
\\ &=1-4b_1(x_1-b_1)-4b_2(x_2-b_2)+O(|b|^2|x-b|^2)+O(|b|^3|x-b|).
\end{aligned}$$
Then \eqref{proprop} becomes
\beq\label{proprop1}\begin{aligned}\frac{\la}{4} V(x^\frac12) e^{PW_{\la}}&=(1+2\de^2)\frac{V(x^\frac12)}{ V (b^\frac12)}e^{W_{\la}}-4\frac{V(x^\frac12)}{ V (b^\frac12)}e^{W_{\la}}(b_1(x_1-b_1)+b_2(x_2-b_2)\\ &\;\;\;\; +e^{W_{\la}}\Big(O(|b|^2|x-b|^2)+O(|b|^3|x-b|)+O(\de^2|b|)+O(\de^4)\Big).
\end{aligned}\eeq
Using the expansion provided by Lemma \ref{crucru} into \eqref{proprop1},  and the crucial assumption \eqref{techni}, we get the estimate \eqref{stimarafff}.
Observe that \eqref{stimarafff} can be written in more approximate way as $$R_\la=e^{W_\la}\Big(O(\de^2)+O(|b|^2|x-b|)+O(|x-b|^2) +O(|b|^{\frac72}\Big).$$ So, by applying Lemma \ref{aux000} we obtain the $L^p$ estimate. 
\end{proof}

\begin{rmk}\label{banale} We observe that for general coefficients $A_0, A_1, A_2$,  after substituting the expansion of Lemma \ref{crucru} into \eqref{proprop1}  we obtain that the following term  $$e^{W_\la}(2A_0-4)b_1(x_1-b_1)+e^{W_\la}\Big(A_0+\frac{A_2}{2}-4\Big)b_2(x_2-b_2)+e^{W_\la} \frac{A_1}{2}\Big(b_1(x_2-b_2)+b_2(x_1-b_1)\Big)$$ does not vanish  
and actually shall represent the leading term in the estimate of the error $R_\la$. 
This will explain later in Remark \ref{banale1} why 
the result of Theorem \ref{th2} fails in general without the assumption \eqref{techni}.
\end{rmk}

\section{The nonlinear problem: a contraction argument}
In order to solve \eqref{proreg1}, let us consider the following intermediate problem:

\beq\label{inter}\left\{\begin{aligned}&-\Delta(PW_{\la}+\phi)-\frac{\la}{4} V(x^{\frac12}) e^{PW_{\la}+\phi}=\sum_{j=1,2}c_j Z_\la^je^{W_{\la}},\\ &\phi \in H^1_{0}( B_1 ),\;\;\;\; \into \nabla \phi\nabla PZ_\la^jdx=0,\;\; j=1,2.\end{aligned}\right.\eeq

 Then it is convenient to solve as a first step the problem for $\phi$ as a function of $b$.
 
Let us rewrite problem \eqref{inter} in a more convenient way.  In what follows we denote by $N:H^1_{0}( B_1 )\to K^\perp$ the nonlinear operator
$$\begin{aligned}N(\phi)&=\Pi^\perp\({i^*_{p}}\bigg(       \frac{\la}{4} V\big(x^{\frac12}\big) e^{PW_{\la}}(e^{\phi}-1-\phi) \bigg)\).\end{aligned}$$
 Therefore problem \eqref{inter} turns out to be equivalent to the problem

 \beq\label{interop}  L(\phi)+N(\phi)=\tilde R,\quad \phi\in K^\perp\eeq
 where, recalling Lemma \ref{aux00},  $$\tilde R=\Pi^\perp\({i^*_{p}}\big(R_\la\big)\)=
 \Pi^\perp\(PW_{\la} -{i^*_{p}}\bigg(\frac{\la}{4} V\big(x^{\frac12}\big) e^{P W_{\la}}\bigg)\).$$

 We need the following auxiliary lemma.

 \begin{lemma}\label{auxnonl}
 For any $p> 1$ and
 any $\phi_1,\phi_2\in H_{0}^1( B_1 )$ with $\|\phi\|_1,\,\|\phi_2\|<1$ the following holds

\beq\label{skate1}\|e^{\phi_1}-\phi_1-e^{\phi_2}+\phi_2\|_p\leq C(\|\phi_1\|+\|\phi_2\|)\|\phi_1-\phi_2\|,\eeq
 \beq\label{skate2}\|N(\phi_1)-N(\phi_2)\|\leq C\de^{-2\frac{p^2-1}{ p^2}}(\|\phi_1\|+\|\phi_2\|)\|\phi_1-\phi_2\|\eeq
 uniformly for $b$ in a small neighborhood of $0$.
 \end{lemma}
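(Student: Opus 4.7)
The plan is to establish \eqref{skate1} first by pointwise calculus combined with Hölder's inequality, Sobolev embedding and the Moser--Trudinger bound of Lemma \ref{tmt}, and then deduce \eqref{skate2} directly from \eqref{skate1} by invoking the continuity of $i^*_p$ (cf.\ \eqref{isp}), the boundedness of the projection $\Pi^\perp$ and Lemma \ref{aux00}.

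For \eqref{skate1}, I would start from the integral identity
$$e^{\phi_1}-\phi_1-e^{\phi_2}+\phi_2=(\phi_1-\phi_2)\int_0^1\bigl(e^{t\phi_1+(1-t)\phi_2}-1\bigr)\,dt,$$
obtained by differentiating $t\mapsto e^{t\phi_1+(1-t)\phi_2}-t\phi_1-(1-t)\phi_2$. The elementary inequality $|e^s-1|\le |s|e^{|s|}$ then yields the pointwise bound
$$\bigl|e^{\phi_1}-\phi_1-e^{\phi_2}+\phi_2\bigr|\le |\phi_1-\phi_2|\,(|\phi_1|+|\phi_2|)\,e^{|\phi_1|+|\phi_2|}.$$
I would then take the $L^p$-norm and apply Hölder's inequality with three exponents $p_1,p_2,p_3>1$ satisfying $\tfrac{1}{p_1}+\tfrac{1}{p_2}+\tfrac{1}{p_3}=\tfrac{1}{p}$. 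The two polynomial factors are handled by Sobolev embedding in dimension two, which gives $\||\phi_1|+|\phi_2|\|_{p_1}\le C(\|\phi_1\|+\|\phi_2\|)$ and $\|\phi_1-\phi_2\|_{p_2}\le C\|\phi_1-\phi_2\|$. The exponential factor is handled by Lemma \ref{tmt}:
$$\bigl\|e^{|\phi_1|+|\phi_2|}\bigr\|_{p_3}\le C\,\exp\!\Bigl(\tfrac{p_3}{16\pi}\bigl\||\phi_1|+|\phi_2|\bigr\|^2\Bigr)\le C,$$
the last bound following from the a priori assumption $\|\phi_1\|,\|\phi_2\|<1$. Multiplying the three factors gives \eqref{skate1}.

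For \eqref{skate2}, using that $\Pi^\perp$ is a bounded projection on $H^1_0(B_1)$, that $i^*_p:L^p(B_1)\to H^1_0(B_1)$ is continuous by \eqref{isp}, and that $V$ is bounded above on $\overline B_1$, I would write
$$\|N(\phi_1)-N(\phi_2)\|\le C\,\Bigl\|\la e^{PW_\la}\bigl(e^{\phi_1}-\phi_1-e^{\phi_2}+\phi_2\bigr)\Bigr\|_p.$$
Next I would apply Hölder's inequality with the conjugate exponents $p$ and $p/(p-1)$ to split this as $\|\la e^{PW_\la}\|_{p^2}\cdot\|e^{\phi_1}-\phi_1-e^{\phi_2}+\phi_2\|_{p^2/(p-1)}$. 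Lemma \ref{aux00} with $\gamma=0$ and $p^2$ in place of $p$ controls the first factor by $C\de^{-2(p^2-1)/p^2}$, which matches exactly the power appearing in \eqref{skate2}. For the second factor I would reapply \eqref{skate1} with $p^2/(p-1)>1$ in place of $p$ (noting that the constant then depends only on this fixed exponent, hence on $p$), producing the desired bound $C(\|\phi_1\|+\|\phi_2\|)\|\phi_1-\phi_2\|$.

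The main obstacle is the bookkeeping in Step 1: one has to choose the Hölder exponents $p_1,p_2,p_3$ so that the polynomial factors fit into a Sobolev embedding and, simultaneously, the exponential factor fits into Moser--Trudinger with a uniformly bounded constant. The a priori smallness $\|\phi_i\|<1$ is precisely what keeps the Moser--Trudinger factor bounded independently of $\la$; without it the exponent $\tfrac{p_3}{16\pi}\||\phi_1|+|\phi_2|\|^2$ could blow up and spoil the Lipschitz estimate on $N$.
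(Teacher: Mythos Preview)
Your proposal is correct and follows essentially the same approach as the paper: the paper also establishes the pointwise bound $|e^a-a-e^b+b|\le e^{|a|+|b|}(|a|+|b|)|a-b|$ (which you derive via the integral identity), applies a three-factor H\"older inequality together with Moser--Trudinger and Sobolev embeddings to get \eqref{skate1}, and then proves \eqref{skate2} by the same H\"older splitting $\|\la e^{PW_\la}\|_{p^2}\cdot\|e^{\phi_1}-\phi_1-e^{\phi_2}+\phi_2\|_{pq}$ combined with Lemma~\ref{aux00}. Your write-up is slightly more explicit about the role of $\Pi^\perp$ and about deriving the pointwise inequality, but the argument is the same.
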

 \begin{proof} 
 
A straightforward computation gives that  the inequality $|e^a-a-e^b+b|\leq e^{|a|+|b|}(|a|+|b|)|a-b|$ holds  for all $a,b\in \R$. Then, by applying H\"older's inequality with $\frac1q+\frac1r+\frac1t=1$, we derive
$$\|e^{\phi_1}-\phi_1-e^{\phi_2}+\phi_2\|_p\leq C\|e^{|\phi_1|+|\phi_2|}\|_{pq}(\|\phi_1\|_{pr}+\|\phi_2\|_{pr})\|\phi_1-\phi_2\|_{pt}$$
 and \eqref{skate1} follows by using Lemma \ref{tmt} and the continuity of the embeddings $H^1_{0}( B_1 )\subset L^{pr}( B_1 )$ and $H^1_0( B_1 )\subset L^{pt}( B_1 )$.
  Let us prove \eqref{skate2}. According to \eqref{isp}  we get
 $$\|{N}(\phi_1)-{N}(\phi_2)\|\leq C\|\la V(x^{\frac12})  e^{PW_{\la}}(e^{\phi_1}-\phi_1-e^{\phi_2}+\phi_2)\|_p,$$
and by H\"older's inequality with $\frac1p+\frac1q=1$,  we derive
$$\begin{aligned}\|{N}(\phi_1)-{N}(\phi_2)\|&\leq C\|\la V(x^{\frac12}) e^{PW_{\la}}\|_{p^2}\|e^{\phi_1}-\phi_1-e^{\phi_2}+\phi_2|\|_{pq}\\ &\leq C\|\la V(x^{\frac12}) e^{PW_{\la}}\|_{p^2}(\|\phi_1\|+\|\phi_2\|)\|\phi_1-\phi_2\|\end{aligned}
$$ by \eqref{skate1}, 
and  the conclusion follows by Lemma \ref{aux00}.
 \end{proof}

Problem \eqref{inter} or, equivalently, problem \eqref{interop} turns out to be solvable for any choice of point $b $ in a small neighbourhood of $0$,  
provided
that $\la$ is sufficiently small. Indeed we have the following result.

\begin{prop}\label{nonl} Assume ${\mathrm{(H1)-(H2)-(H3)}}$ and \eqref{techni}  hold and let $\e>0$ be a fixed small number. Then there exists $\la_0>0$ such that for any $\la\in (0,\la_0)$ and any $b\in\R^2$ with $|b|\leq \de^\frac23$ 
   there is a unique $\phi_\la=\phi_{\la,b}\in K^\perp$ satisfying \eqref{inter} for some $c_1,c_2\in \R$ and
\beq\label{nonll}\|\phi_{\la}\|\leq  \de^{2-\e}
.\eeq

Moreover the map $b\mapsto \phi_{\la, b}\in H^1_0(B_1)$ is continuous. 
\end{prop}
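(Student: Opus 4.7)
The plan is to recast the intermediate problem \eqref{inter} as the fixed point equation \eqref{interop}, namely $L(\phi)+N(\phi)=\tilde R$ in $K^\perp$, and then apply the Banach contraction principle to the map
$$T:K^\perp\to K^\perp,\qquad T(\phi):=L^{-1}\bigl(\tilde R-N(\phi)\bigr),$$
which is well defined by the invertibility of $L$ established in Proposition \ref{ex}.

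The first step is to bound $\|\tilde R\|$. Since $\tilde R=\Pi^\perp\bigl(i^*_p(R_\la)\bigr)$, the continuity \eqref{isp} of the adjoint operator and the $L^p$ estimate of Proposition \ref{aux000} give
$$\|\tilde R\|\le C\|R_\la\|_p\le C(\de^2+|b|^3)\,\de^{-2\frac{p-1}{p}}.$$
Under the assumption $|b|\le \de^{2/3}$ one has $|b|^3\le \de^2$, so $\|\tilde R\|\le C\de^{2/p}$. By Proposition \ref{ex}, this yields $\|L^{-1}(\tilde R)\|\le C|\log\de|\,\de^{2/p}$, and choosing $p>1$ sufficiently close to $1$ (depending only on the fixed $\e$) gives $\|L^{-1}(\tilde R)\|\le \tfrac12\de^{2-\e}$ for all $\la$ small.

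The second step is the contraction estimate. Combining Proposition \ref{ex} with the nonlinear bound \eqref{skate2} of Lemma \ref{auxnonl}, for any $\phi_1,\phi_2$ in the closed ball $B_\la:=\{\phi\in K^\perp : \|\phi\|\le \de^{2-\e}\}$ we get
$$\|T(\phi_1)-T(\phi_2)\|\le \|L^{-1}\|\,\|N(\phi_1)-N(\phi_2)\|\le C|\log\de|\,\de^{-2\frac{p^2-1}{p^2}}(\|\phi_1\|+\|\phi_2\|)\|\phi_1-\phi_2\|,$$
which, on $B_\la$, is bounded by $C|\log\de|\,\de^{-2(p^2-1)/p^2}\,\de^{2-\e}\|\phi_1-\phi_2\|$. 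Taking $p$ close enough to $1$, the factor multiplying $\|\phi_1-\phi_2\|$ is $o(1)$ as $\la\to 0^+$, hence $T$ is a $\tfrac12$-contraction on $B_\la$. Combined with $\|T(0)\|\le\tfrac12\de^{2-\e}$ from the first step, we obtain $T(B_\la)\subset B_\la$, and Banach's theorem delivers a unique $\phi_\la=\phi_{\la,b}\in B_\la$ with $T(\phi_\la)=\phi_\la$, which is exactly the required solution to \eqref{inter} (the multipliers $c_1,c_2$ are then recovered by testing the equation against $PZ_\la^j$).

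Finally, the continuity of the map $b\mapsto \phi_{\la,b}$ follows from a standard parameter-dependent fixed point argument: both $\tilde R$ and the operator $N$ depend continuously on $b$ through the smooth dependence of $PW_\la$ (and $\de=\de(\la,b)$) on $b$, and uniform contractivity of $T$ on $B_\la$ transfers this continuity to the fixed point. The only delicate point in the whole argument is the balancing of exponents: one must choose $p$ close enough to $1$ so that simultaneously $2/p>2-\e$ and $2-\e-2(p^2-1)/p^2>0$, absorbing the logarithmic loss $|\log\de|$ coming from $L^{-1}$; this is the main technical check but it is routine once $\e>0$ is fixed.
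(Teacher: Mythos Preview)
Your proof is correct and follows essentially the same approach as the paper's own argument: both recast \eqref{inter} as the fixed-point equation \eqref{interop}, estimate $\tilde R$ via Proposition \ref{aux000} and the constraint $|b|\le\de^{2/3}$, use Lemma \ref{auxnonl} for the contraction, and invoke Proposition \ref{ex} for the bound on $L^{-1}$; the only cosmetic difference is that the paper verifies $T(B_\la)\subset B_\la$ directly, whereas you deduce it from the contraction together with the bound on $\|T(0)\|$.
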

\begin{proof} 
 Since problem \eqref{interop} is equivalent to problem \eqref{inter},
we will show that problem \eqref{interop} can be solved via a contraction mapping argument. Indeed, in virtue of Proposition \ref{ex}, let us introduce the map
$$T:=L^{-1}(\tilde R-N(\phi)),\quad \phi\in K^\perp.$$
 Let us fix 
$p>1$ sufficiently close to 1. 
 By \eqref{isp} and Proposition \ref{aux000}, if $|b|\leq\de^\frac23$ we get 
\beq\label{non1}\|\tilde R\|\leq  C\de^{2-\frac\e2} 
.\eeq
Next, by \eqref{skate2},
\beq\label{non2}\|N(\phi_1)-N(\phi_2)\|\leq C\de^{-\frac{\e}{2}}(\|\phi_1\|+\|\phi_2\|)\|\phi_1-\phi_2\|\quad \forall \phi_1,\phi_2\in H_{0}^1( B_1 ), \|\phi_1\|,\|\phi_2\|<1.\eeq In particular, by taking $\phi_2=0$,
\beq\label{non3}\|N(\phi)\|\leq C\de^{-\frac{\e}{2}}\|\phi\|^2\quad \forall \phi\in H_{0}^1( B_1 ), \|\phi\|<1.\eeq

We claim that $T$ is a contraction map over the ball $${\cal B}:=\Big\{\phi\in K^\perp\,\Big|\, \|\phi\|\leq \de^{2-\e} 
\Big\}$$ provided that $\la$ is small enough. Indeed, combining  Proposition \ref{ex}, \eqref{non1}, \eqref{non2}, \eqref{non3}, for any $\phi\in {\cal B}$  we have
$$\begin{aligned}\|T(\phi)\|&\leq C|\log\de| \big(\|\tilde R\|+\|N(\phi)\|\big)\leq  C|\log\de|\de^{2-\frac\e2} <\de^{2-\e}
.\end{aligned}$$  Similarly, for any $\phi_1,\phi_2\in{\cal B}$  
$$\|T(\phi_1)-T(\phi_2)\|\leq C|\log\de|\|N(\phi_1)-N(\phi_2)\|\leq C\de^{-\frac{\e}{2}}|\log\de| (\|\phi_1\|+\|\phi_2\|)\|\phi_1-\phi_2\|\leq \frac12\|\phi_1-\phi_2\|.$$
Uniqueness of solutions implies continuous dependence of $\phi_\la=\phi_{\la,b}$  on $b$.
\end{proof}

\section{Proof of Theorems \ref{th2} and  Theorem \ref{main2}}
During this section we assume that the crucial assumption ${\mathrm{(H1)-(H2)-(A3)}}$  and \eqref{techni} of Theorem \ref{th2} hold. 

After problem \eqref{inter} has been solved according to Proposition \ref{nonl}, then we find a solution to the original problem \eqref{proreg1} if $b\in \R^2$   is such that $|b|\leq \de^\frac23$ and  $$c_1=c_2=0.$$ 
 Let us find the condition satisfied by $b$ in order to get  $c_1, c_2$  equal to zero.

\subsection*{Proof of Theorem \ref{main2}} 
We multiply the equation in \eqref{inter} by   $PZ_\la^i$ and integrate over $B_1$:
\beq\label{masca}\begin{aligned}\into \nabla (PW_\la+\phi_{\la}) \nabla PZ^i_{\la} dx &-\frac{\la}{4} \into V\big(x^\frac12\big) e^{P W_\la+\phi_{\la}}PZ^i_{\la} dx\\&=\sum_{h=1,2}c_h \into Z_\la^h e^{W_\la}PZ_\la^i dx.\end{aligned}
\eeq
The object  is now to expand each integral of the above identity and analyze the  leading term. In the remaining part of the section all the estimates  hold uniformly for $|b|\leq \de^\frac23$, without further notice.  

 Let us begin by observing that  the orthogonality in \eqref{inter} gives
\beq\label{masca1} \into \nabla \phi_{\la} \nabla PZ^i_{\la} dx =\into e^{W_\la} \phi_{\la}Z_\la^i dx=0\eeq
and, by \eqref{pzi}, \beq\label{masca2}\begin{aligned}\into Z_\la^h e^{W_\la}PZ_\la^i dx
= \intr \frac{8z_iz_h}{(1+|z|^2)^4} dz+o(1) =\left\{\begin{aligned}&\frac23\pi+o(1) &\hbox{ if }& h=i\\ & o(1)& \hbox{ if }& h\neq i\end{aligned}\right.
\end{aligned}\eeq where we have used that $\intr \frac{z_i^2}{(1+|z|^2)^4} dz= \frac23 \pi$ and $\intr \frac{z_1z_2}{(1+|z|^2)^4} dz= 0.$
Using the definition of $R_\la$ in Lemma \ref{aux000}, \eqref{masca1} and \eqref{masca2}, then \eqref{masca}  becomes 

\beq\label{mascara}\begin{aligned}\into R_\la PZ^i_{\la} dx &+\frac{\la}{4} \into V(x^{\frac12}) e^{P W_\la}(e^{\phi_\la}-1)PZ^i_{\la} dx=\left\{\begin{aligned}&-\frac23\pi+o(1) &\hbox{ if }& h=i\\ & o(1)& \hbox{ if }& h\neq i\end{aligned}\right.
.\end{aligned}
\eeq
Let us first estimate the term containing the function $\phi_\la$: recalling \eqref{masca1} 
\beq\label{mos}\begin{aligned}\frac{\la}{4} \into V(x^{\frac12}) e^{P W_\la}(e^{\phi_\la}-1)PZ^i_{\la} dx&= \into R_\la
(e^{\phi_\la}-1)PZ^i_{\la} dx
\\ &\;\;\;\;+ \into  e^{W_\la}(e^{\phi_\la}-1-\phi_\la)PZ^i_{\la}  dx\\ &\;\;\;\;+ \into  e^{W_\la}\phi_\la(PZ^i_{\la}-Z_\la^i)  dx.
\end{aligned}\eeq
Now, let us fix $\e>0$ sufficiently small and $p>1$ sufficiently close to 1.  
Next let  $1<q<\infty$ be such that  $\frac1p+\frac1q=1$. Then, 
\eqref{skate1} with $\phi_2=0$ and Proposition \ref{nonl} give
$$\|e^{\phi_\la}-1-\phi_\la\|_q\leq  C\|\phi_\la\|^2\leq C \de^{4-2 \e} 
$$
and, consequently, 
\beq\label{tmtsur}\|e^{\phi_\la}-1\|_q\leq C\|\phi_\la\|\leq C\de^{2-\e}
.\eeq
Therefore,  Lemma \ref{aux00}  implies
 \beq\label{capra1}\begin{aligned} 
 \into e^{W_\la}(e^{\phi_\la}-1-\phi_\la)PZ^i_{\la} dx&
= O(\|  e^{ W_\la}(e^{\phi_\la}-1-\phi_\la)\|_1)=
O(\|   e^{W_\la} \|_p\|e^{\phi_\la}-1-\phi_\la\|_q)\\ &=O\Big(\de^{4-2\frac{p-1}{p}-2\e} \Big).
\end{aligned}\eeq
Now, by Lemma \ref{aux000}
 \beq\label{capra2}\begin{aligned}  \into R_\la (e^{\phi_\la}-1) 
 PZ^i_{\la} dx&=
O\big(\big\|  
R_\la (e^{\phi_\la}-1)\big\|_1\big)
=O\big(\big\| 
R_\la\|_p\|e^{\phi_\la}-1\|_q\big)\\ &= O\Big(\de^{4-2\frac{p-1}{p}-\e}\Big).\end{aligned}\eeq
Finally by Lemma \ref{pzirafff}  and  Lemma \ref{aux00}, using that $|b|\leq \de^{\frac23}$,
 \beq\label{capra3}\begin{aligned}  \into  e^{W_\la}\phi_\la(PZ^i_{\la}-Z_\la^i)  dx&=-\de \into e^{W_\la}\phi_\la(x_1-b_1)  dx+O\bigg(\de^\frac53\into e^{W_\la}|\phi_\la| dx\bigg)\\ &=O(\de\||x-b|e^{W_\la}\|_p\|\phi_\la\|)+O(\de^{\frac53}\|e^{W_\la}\|_p\|\phi_\la\|)
 \\ &=O(\de^{4-\e-2\frac{p-1}{p}})+O(\de^{\frac{11}{3}-2\frac{p-1}{p}-\e})
. \end{aligned}\eeq
 By inserting \eqref{capra1}-\eqref{capra2}-\eqref{capra3} into \eqref{mos}, 
 we obtain
 \beq\label{capra}\la \into V\big(x^\frac1\al\big) e^{P W_\la}(e^{\phi_\la}-1)PZ^i_{\la} dx= O(\de^3) \eeq provided that that $\e$ is chosen sufficiently close to $0$ and $p$ sufficiently close to $1$.
Next, by \eqref{stimarafff}, using  Lemma \ref{robin2} and Lemma \ref{robin22}, 
we get
 $$\begin{aligned}\into R_\la PZ_\la^1dx &= 2\pi\de\bigg(
 3b_1^2D_0+D_1b_1b_2+\frac{15D_0-D_2}{4}\de^2\log\frac1\de + \frac{3D_0-D_2}{4}b_2^2\bigg)\\ &\;\;\;\;+O(\de^3)+O(\de^2| b|)+O(|b|^\frac72)+O(|b|^3\de).
\end{aligned}$$
 $$\begin{aligned}\into R_\la PZ_\la^2dx &= 2\pi\de\bigg(
 \frac{D_1}{2}b_1^2+\frac{3D_0-D_2}{2}b_1b_2+\frac{10D_1+3D_3}{8}\de^2\log\frac1\de +3 \frac{2D_1+D_3}{8}b_2^2
 \bigg)\\ &\;\;\;\;+O(\de^3)+O(\de^2| b|)+O(|b|^{\frac72})+O(|b|^3\de).
\end{aligned}$$
By inserting  the above identity and \eqref{capra} into \eqref{mascara}  we deduce 
 \beq\label{strep1}2\pi\de^2\log\frac1\de  F\bigg(\frac{ b}{\de\sqrt{\log\frac1\de}}\bigg)+O(\de^3)+O(\de^2 |b|)+O(|b|^{\frac72})+O(|b|^3\de)= -\frac23\pi c+o(|c|), \eeq
 where $c=(c_1,c_2)$ and $F:\R^2\to\R^2$ denotes the vector field defined in \eqref{vector}.

Now let $\xi_0\neq 0$ be a zero for $F$ which is stable under uniform perturbations according to Theorem \ref{th2}; then \eqref{strep1} gives that the following holds
 \beq\label{strep2}\begin{aligned} \de^2\log\frac1\de  F\bigg(\frac{ b}{\de\sqrt{\log\frac1\de}}\bigg)+o\bigg(\de^2\log\frac1\de \bigg)
 = -\frac{c}{3} +o(|c|)\; \hbox{ unif. for }|b|\leq 2|\xi_0|\de\sqrt{\log\frac1\de}.
 \end{aligned}\eeq
 Now, setting  $$\tilde b=\frac{b}{\de\sqrt{\log\frac1\de}},$$ we rewrite \eqref{strep2} as 
\beq\label{strep3}\begin{aligned} \de^2\log\frac1\de  \bigg(F(\tilde b)+o(1)\bigg) 
= -\frac{c}{3}+o(|c|)\; \hbox{ unif. for }|\tilde b|\leq 2|\xi_0|.
\end{aligned}\eeq
  The continuity of the map $b\mapsto \phi_\la=\phi_{\la,b}$ guaranteed by Proposition \ref{nonl} implies that the left hand side of \eqref{strep3} is continuous too. So, the uniform stability gives that, if $\eta>0$ is sufficiently small, then for $\la$ small enough the left hand side of \eqref{strep3} has a zero $\tilde b_\la$ with $ |\tilde b_\la-\xi_0|\leq \eta$ or, equivalently, the left hand side of \eqref{strep2} has a zero $b_\la$ with $\big|\frac{b_\la}{\de\sqrt{\log\frac1\de}}-\xi_0\big|\leq \eta.$ The arbitrariness of $\eta$ implies
 $$b_\la= \xi_0 \de\sqrt{\log\frac1\de}(1+o(1)).$$

 \begin{rmk}\label{banale1} We point put that, for general coefficients $A_0,A_1, A_2$, then the error term $R_\la$  reduces to the expression in Remark  \ref{banale} at the leading part and,   thanks to Lemma \ref{robin2}, when we multiply it against $PZ_i^\la$ we actually obtain
 $$2\pi\de (2A_0-4)b_1+\pi A_1\de b_2+h.o.t, \quad 2\pi\de\Big(A_0+\frac{A_2}{2}-4\Big)b_2+\pi A_1\de b_1+h.o.t.$$
 which in general 
admits the only  trivial zero $ b = 0$  for the leading term, 
 so we are unable to catch a non-simple blow-up solution without the assumption \eqref{techni}. 
 \end{rmk}
 
 \subsection{Proof of Theorems  \ref{th2}.} Theorem \ref{main2} provides a solution to the problem \eqref{proreg1} of the form $$w_\la=PW_\la+\phi_\la$$ where $\phi_\la=\phi_{\la, b_\la}\in H^1_0(B_1)$ satisfies \eqref{nonll} and  $b=b_\la$ satisfies \eqref{saltria}. 

Moreover, using  \eqref{tmtsur} and Lemma \ref{aux00}, by H\"older's inequality with $\frac1p+\frac1q=1$ we get
$$\begin{aligned}\la\| V(y^{\frac12})(e^{{w}_\la}- e^{{PW}_\la})\|_1&=\la\|V(y^\frac12)e^{{PW}_\la}(e^{{\phi}_\la}-1)\|_{1}
\\ &\le\la \|e^{{PW}_\la}\|_p\|e^{\phi_\la}-1\|_q\\ &=O(\de^{2-2 \frac{p-1}{ p}-\e })=o(1), \end{aligned}$$ if $p$ is chosen sufficiently close to 1 and $\e$ sufficiently close to $0$. Similarly, by Proposition \ref{aux000},
$$\begin{aligned}\Big\|\frac{\la}{4} V(y^{\frac12})e^{{PW}_\la}-e^{W_\la}\Big\|_1&=\|R_\la\|_1
=
O(\de^{2-2 \frac{p-1}{ p}})=o(1).\end{aligned}$$
Therefore 
$$\begin{aligned}\Big\|\frac{\la }{4}V(y^{\frac12})e^{w_\la}-e^{W_\la}\Big\|_1=o(1).\end{aligned}$$
Clearly, by \eqref{chva} and \eqref{chva1},  $$u_\la(x)=w_\la(x^2)-4\pi  G(x,0)= w_\la(x^2)-2\log\frac{1}{|x|} $$ solves equation \eqref{eq0}
and 
$$\begin{aligned}\|\la V(x)e^{u_\la(x)}-4 |x|^{2}e^{W_\la(x^2)}\|_1&=4\Big\|\frac{\la}{4} |x|^{2} V(x)e^{{w}_\la(x^2)}-|x|^{2}e^{W_\la(x^2)}\Big\|_1
\\ &= 2\Big\|\frac{\la}{4} V(y^\frac12)e^{{w}_\la(y)}-e^{W_\la(y)}\Big\|_1=o(1)\end{aligned}$$ by Lemma \ref{copy}. Hence,
recalling \eqref{quantum} and Lemma \ref{copy},
$$\begin{aligned}\la \into V(x)e^{u_\la}dx&=4
\intr|x|^{2}V (x)e^{{W}_\la(x^2)}dx+o(1)\\ &=2 \intr V(y^{\frac12})e^{W_\la(y)} dy+o(1)=
16\pi+o(1). \end{aligned}$$
Similarly for every neighborhood $U$ of $0$ $$\la \int_U V(x) e^{u_\la}dx\to 16\pi. $$Theorem \ref{th2} is thus completely proved by setting $\mu^2=\de$.

\appendix
\renewcommand{\theequation}{\Alph{section}.\arabic{equation}}

\section{}
In this appendix we derive some crucial integral estimates which arise  in the asymptotic expansion of the energy of approximate solution $PW_{\la}$. 

\begin{lemma}\label{aux00app} The following holds:
$$\into e^{W_{\la}}  |x-b|dx=O(\de),\quad\into e^{W_{\la}}  |x-b|^2dx=16\pi\de^2|\log\de|+O(\de^2),\quad \into e^{W_{\la}} |x-b|^3 dx=O(\de^2)
$$
uniformly for $b$ in a small neighborhood of $0$. 
\end{lemma}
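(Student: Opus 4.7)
The plan is a direct computation using the explicit form $e^{W_\la}=\dfrac{8\de^2}{(\de^2+|x-b|^2)^2}$ together with the scaling $x=b+\de z$. Under this change of variables $dx=\de^2 dz$, the image $\Omega_\la:=\de^{-1}(B_1-b)$ contains $B_{c/\de}(0)$ and is contained in $B_{C/\de}(0)$ for positive constants $c,C$ independent of $\la$, since $b$ lies in a small neighborhood of $0$. In this way every integral reduces to
\[
\into e^{W_\la}|x-b|^\gamma dx=8\,\de^{\gamma}\int_{\Omega_\la}\frac{|z|^\gamma}{(1+|z|^2)^2}dz.
\]

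For $\gamma=1$ the integrand $\frac{|z|}{(1+|z|^2)^2}$ behaves like $|z|^{-3}$ at infinity, hence is integrable over $\R^2$. Therefore the right-hand side is bounded by $C\de$, which gives the first estimate. For $\gamma=3$, the integrand is comparable to $1$ as $|z|\to\infty$, so in polar coordinates
\[
\int_{\Omega_\la}\frac{|z|^3}{(1+|z|^2)^2}dz \leq 2\pi\int_0^{C/\de}\frac{r^4}{(1+r^2)^2}dr=O(1/\de),
\]
and multiplying by $8\de^3$ yields the $O(\de^2)$ bound for the third integral.

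The main case is $\gamma=2$, where the scaled integrand $\frac{|z|^2}{(1+|z|^2)^2}\sim |z|^{-2}$ produces the logarithmic divergence responsible for the leading factor $|\log\de|$. Passing to polar coordinates and using the substitution $u=1+r^2$, one computes
\[
\int_0^{R}\frac{r^3}{(1+r^2)^2}dr=\tfrac12\log(1+R^2)+\tfrac{1}{2(1+R^2)}-\tfrac12=\log R+O(1)
\]
as $R\to\infty$. Since $\Omega_\la$ contains $B_{c/\de}$ and is contained in $B_{C/\de}$, we deduce
\[
\int_{\Omega_\la}\frac{|z|^2}{(1+|z|^2)^2}dz=2\pi\,|\log\de|+O(1),
\]
so multiplying by $8\de^2$ gives $\into e^{W_\la}|x-b|^2 dx=16\pi\de^2|\log\de|+O(\de^2)$, as claimed.

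The only delicate point is to make sure the cut-off of the scaled domain $\Omega_\la$ does not affect the leading order: for $\gamma=1$ this is automatic since the whole integral on $\R^2$ converges, while for $\gamma=2,3$ one just notes that the portion of the integral with $|z|\geq c/\de$ is $O(1)$ and $O(1/\de)$ respectively, which gets absorbed in the stated remainders. All estimates are uniform for $b$ in a small neighborhood of the origin because the constants $c,C$ controlling $\Omega_\la$ can be chosen independently of $b$ in such a neighborhood.
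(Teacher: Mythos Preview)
Your argument is correct and follows essentially the same approach as the paper: rescale via $x=b+\de z$, use the inclusion $B_{c/\de}\subset\Omega_\la\subset B_{C/\de}$, and reduce to explicit radial integrals. The paper organizes the $\gamma=2$ case slightly differently (splitting $B_1$ into $B(b,1-|b|)$ and an annulus before scaling), but the content is the same. One cosmetic point: for $\gamma=3$ the integrand $\frac{|z|^3}{(1+|z|^2)^2}$ is comparable to $|z|^{-1}$, not to $1$, at infinity; it is the radial integrand $\frac{r^4}{(1+r^2)^2}$ (after including the polar Jacobian) that is $\sim 1$, which is what your displayed bound actually uses.
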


\begin{proof} We compute $$\begin{aligned}\into e^{W_{\la}}  |x-b|dx &
  \leq 8\de\intr \frac{1}{(1+|z-\de^{-1} b|^2)^2}|z-\de^{-1}b|dz
 =8\de\intr \frac{|z|}{(1+|z|^2)^2}dz
\end{aligned}$$
and the first estimate follows. In order to show the second estimate let us observe that $B(b, 1-|b|)\subset B(0,1)\subset B(b, 1+|b|)$, so we compute
$$\begin{aligned}\into e^{W_{\la}}|x-b|^2 dx &= 8\int_{B_1 }\frac{\de^2|x-b|^2}{(\de^2+|x-b|^2)^2} dx \\ &=
8 \int_{B(b, 1-|b|)} \frac{\de^2|x-b|^2}{(\de^2+|x-b|^2)^2}dx +O\bigg(\int_{ B(b, 1+|b|)\setminus B(b,1-|b|)}\frac{\de^2|x-b|^2}{(\de^2+|x-b|^2)^2}dx\bigg)
\\ &= 8 \int_{B(0, 1-|b|)} \frac{\de^2|x|^2}{(\de^2+|x|^2)^3}dx +O\bigg(\int_{ B(0, 1+|b|)\setminus B(0,1-|b|)}\frac{\de^2|x|^2}{(\de^2+|x|^2)^2}dx\bigg)\\ &= 
8\de^2\int_{|z|\leq \frac{1-|b|}{\de}} \frac{|z|^2}{(1+|z|^2)^2}dz+O\bigg(\de^2\int_{ \frac{1-|b|}{\de}\leq |z|\leq \frac{1+|b|}{\de}}\frac{1}{|z|^2}dz\bigg) \\ &=
8\de^2\int_{|z|\leq \frac{1-|b|}{\de}} \frac{1}{1+|z|^2}dz+O(\de^2)\\ &=
16\pi\de^2|\log\de|+O(\de^2).\end{aligned}$$ 
In order to prove the third estimate,
let $R>1$ so that $ B(0,1) \subset B(b, R)$ if $b$ lies in a small neighborhood of $0$. Then,
$$\begin{aligned}\into e^{W_{\la}}  |x-b|^3dx &  =8\de^{3}\int_{|z|\leq \frac{1 }{\de}} \frac{1}{(1+|z-\de^{-1} b|^2)^2}|z-\de^{-1}b|^3dz\\ &
 \leq 8\de^{3}\int_{B(0,\frac{R}{\de})} \frac{|z|^3}{(1+|z|^2)^2}dz\leq C\de^{2}.
\end{aligned}$$

\end{proof}

Since the key part in the proof of Theorem \ref{main2} relies in testing the equation \eqref{inter} with $PZ_\la^i$ in order to catch the leading terms, a crucial step consists in the evaluation of some integral estimates, as  provided by the following lemma.
\begin{lemma}\label{robin2} The following holds for $i,j=1,2$:
$$\into e^{W_{\la}} PZ^i_\la dx =O(\de),$$ $$\into e^{W_{\la}} PZ^i_\la( x_i-b_i)dx=2\pi\de+O(\de^{2}),\quad\into e^{W_{\la}} PZ^i_\la( x_j-b_j)dx=O(\de^{2})\quad i\neq j,
$$
$$\into e^{W_{\la}} |PZ^i_\la|| x-b|^2dx=O(\de^{2}),
$$
$$\into e^{W_{\la}} PZ^i_\la(x_i-b_i)^3dx=6\pi\de^3\log\frac{1}{\de}+O(\de^3)\quad \into e^{W_{\la}} PZ^i_\la(x_j-b_j)^3dx=O(\de^3)\quad i\neq j,$$ $$\quad \into e^{W_{\la}} PZ^i_\la(x_j-b_j)^2( x_i-b_i)dx=2\pi\de^3\log\frac{1}{\de}+O(\de^3) \quad i\neq j,$$
$$\into e^{W_{\la}} PZ^i_\la(x_i-b_i)^2( x_j-b_j)dx=O(\de^3) \quad i\neq j,$$
 $$\into e^{W_{\la}} |PZ^i_\la|| x-b|^{\frac72}dx=O(\de^{3})$$ 
uniformly for $b$ in a small neighborhood of $0$. 
\end{lemma}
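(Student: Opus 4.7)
The unifying strategy is to write $PZ^i_\la = Z^i_\la + (PZ^i_\la - Z^i_\la)$ with $\|PZ^i_\la-Z^i_\la\|_\infty = O(\de)$ by \eqref{pzi}, and then compute each integral against $Z^i_\la$ explicitly via the rescaling $x = b+\de z$. Under this change of variables one has
\[
e^{W_\la}(x)\,dx = \frac{8\,dz}{(1+|z|^2)^2},\quad Z^i_\la(x) = \frac{z_i}{1+|z|^2},\quad (x_i-b_i)^k = \de^k z_i^k,
\]
and the domain $B_1$ becomes $D_\la := (B_1-b)/\de = B(-b/\de,1/\de)$. Since $|b|$ is small, $D_\la$ satisfies the sandwich $B(0,(1-|b|)/\de)\subset D_\la\subset B(0,(1+|b|)/\de)$, which reduces every computation to an integral of an explicit rational function on a ball of radius $\sim 1/\de$ with a controllable perturbation. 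Each contribution from the projection correction is then disposed of uniformly via $|\!\int e^{W_\la}(PZ^i_\la - Z^i_\la)\,(x-b)^\alpha\,dx|\le C\de\int e^{W_\la}|x-b|^{|\alpha|}\,dx$, which is handled by Lemma \ref{aux00app}.

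For the estimates whose main term is expected to be of order $\de$ (like $\int e^{W_\la}PZ^i_\la(x_i-b_i)\,dx$), the rescaled integrand $\frac{8\de z_i^2}{(1+|z|^2)^3}$ is $L^1(\R^2)$, so I would extend the integral to all of $\R^2$, use $\int_{\R^2}\frac{z_i^2}{(1+|z|^2)^3}dz = \frac\pi4$ to extract the leading $2\pi\de$, and control the tail $\R^2\setminus D_\la$ using the polynomial decay $|z|^{-4}$ to get $O(\de^3)$; adding the $O(\de^2)$ error from $PZ^i_\la - Z^i_\la$ gives $2\pi\de + O(\de^2)$. For the ``off-diagonal" estimates where the main integrand is odd (like $\frac{z_i z_j}{(1+|z|^2)^3}$, $\frac{z_i z_j^3}{(1+|z|^2)^3}$, $\frac{z_i^3 z_j}{(1+|z|^2)^3}$), the integral over a ball centered at $0$ vanishes by parity; using the sandwich and the fact that the symmetric difference $B(0,(1+|b|)/\de)\setminus B(0,(1-|b|)/\de)$ has thickness $O(|b|/\de)$, the remaining piece is $O(|b|)$ times a harmless factor, which combines with the $PZ-Z$ error to give the stated order.

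The delicate estimates are the two log-divergent ones, $\int e^{W_\la}PZ^i_\la(x_i-b_i)^3dx$ and $\int e^{W_\la}PZ^i_\la(x_j-b_j)^2(x_i-b_i)dx$. Here the rescaled kernels $\frac{z_i^4}{(1+|z|^2)^3}$ and $\frac{z_i^2z_j^2}{(1+|z|^2)^3}$ decay like $|z|^{-2}$, so their integrals are logarithmically divergent. In polar coordinates one has $\int_0^{2\pi}\cos^4\theta\,d\theta = \tfrac{3\pi}4$ and $\int_0^{2\pi}\cos^2\theta\sin^2\theta\,d\theta = \tfrac\pi4$, and a direct primitive of $\frac{r^5}{(1+r^2)^3}$ (via $u=1+r^2$) yields $\int_0^{R}\frac{r^5}{(1+r^2)^3}dr = \tfrac12\log R + O(1)$. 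Thus the integral over $B(0,1/\de)$ contributes $\tfrac{3\pi}{4}\log\tfrac1\de + O(1)$ (resp.\ $\tfrac\pi4\log\tfrac1\de+O(1)$), which after multiplication by the $8\de^3$ prefactor gives exactly $6\pi\de^3\log\tfrac1\de$ (resp.\ $2\pi\de^3\log\tfrac1\de$). The sandwich error is bounded by $\int_{(1-|b|)/\de}^{(1+|b|)/\de} r^{-1}dr = O(|b|)$, which is absorbed in $O(\de^3)$ since $|b|$ stays small.

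The remaining two estimates, $\int e^{W_\la}|PZ^i_\la||x-b|^2 dx$ and $\int e^{W_\la}|PZ^i_\la||x-b|^{7/2}dx$, are treated by exploiting the bound $|Z^i_\la|\le \frac{\de|x-b|}{\de^2+|x-b|^2}$ to gain an extra factor $\de$, making the resulting rescaled integrands convergent or only mildly divergent; direct scaling then yields $O(\de^2)$ and $O(\de^3)$ respectively, and the lower-order pieces $O(\de)\int e^{W_\la}|x-b|^{2,7/2}dx$ are controlled through the estimate of $\int e^{W_\la}|x-b|^\gamma dx$ done exactly as in Lemma \ref{aux00app} (with $\gamma=7/2$ giving $O(\de^2)$). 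The main obstacle throughout is bookkeeping: ensuring that the boundary corrections arising from the non-symmetric domain $D_\la$ and the higher-order pieces of $PZ^i_\la - Z^i_\la$ never dominate the announced leading term, particularly in the log-divergent cases where the constant in front of $\log\frac1\de$ must be identified exactly.
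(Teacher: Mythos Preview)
Your proposal is correct and follows essentially the same route as the paper: split $PZ^i_\la=Z^i_\la+O(\de)$ via \eqref{pzi}, rescale $x=b+\de z$, use the sandwich $B(0,(1-|b|)/\de)\subset D_\la\subset B(0,(1+|b|)/\de)$ to reduce to explicit radial integrals (with parity killing the odd ones), and absorb the projection correction through Lemma~\ref{aux00app}. The only cosmetic difference is that the paper records closed-form primitives for $\int_{|z|\le r}\frac{z_i^4}{(1+|z|^2)^3}dz$ and $\int_{|z|\le r}\frac{z_i^2z_j^2}{(1+|z|^2)^3}dz$, while you extract the leading $\log$ via polar coordinates; the constants match.
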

\begin{proof} 
We compute $$\begin{aligned}\into e^{W_{\la}} Z^i_\la dx& =8\int_{|z|\leq \frac{1 }{\de}}\frac{1}{(1+|z-\de^{-1}b|^2)^3}(z_i-\de^{-1}b_i) dz\\ &=8\intr\frac{1}{(1+|z-\de^{-1}b|^2)^3}(z_i-\de^{-1}b_i) dz+O(\de^{3})
\\ &=8\intr\frac{z_i}{(1+|z|^2)^3} dz+O(\de^{3})
=O(\de^3),
\end{aligned} $$
since $\intr\frac{z_i}{(1+|z||^2)^3} dz=0$  by oddness. Next
$$\begin{aligned}\into e^{W_{\la}} Z^i_\la(x_i-b_i)dx&=8\de\int_{|z|\leq \frac{1 }{\de}}\frac{1}{(1+|z-\de^{-1}b|^2)^3}(z_i-\de^{-1}b_i)^2 dz \\ &=8\de\intr\frac{1}{(1+|z-\de^{-1}b|^2)^3}(z_i-\de^{-1}b_i)^2 dz+O(\de^{3})
\\ &= 8 \de\intr\frac{z_i^2}{(1+|z|^2)^3}dz+O(\de^{3})\\ &=2\pi \de +O(\de^{3})\end{aligned} $$ where we have used  the identity $\intr \frac{(z_i)^2}{(1+|z|^2)^3} =\frac{1}{2} \intr \frac{|z|^2}{(1+|z|^2)^3} =\frac{\pi}{4} $. Similarly for $i\neq j$ $$\begin{aligned}\into e^{W_{\la}} Z^i_\la(x_j-b_j)dx &= 8 \de\intr\frac{z_iz_j}{(1+|z|^2)^3}dz+O(\de^{3})=O(\de^{3})\end{aligned} $$ since $\intr\frac{z_iz_j}{(1+|z|^2)^3}dz=0. $ Next, $$\begin{aligned}\into e^{W_{\la}}  |Z^i_\la|| x-b|^2dx &
  \leq 8\de^2\intr \frac{|x-b|^3}{(\de^2+|x- b|^2)^3}dx
 =8\de^2\intr \frac{|z|^3}{(1+|z|^2)^3}dz\leq C\de^2.
\end{aligned}$$

Using that $B(b, 1-|b|)\subset B(0,1)\subset B(b, 1+|b|)$,  we compute
$$\begin{aligned}&\into e^{W_{\la}} Z^i_\la(x_i-b_i)^3dx\\ &= 8\de^3\int_{B_1 }\frac{(x_i-b_i)^4 }{(\de^2+|x-b|^2)^3}dx 
\\ &=
8\de^3\int_{B(b, 1-|b|)} \frac{(x_i-b_i)^4}{(\de^2+|x-b|^2)^3}dx +O\bigg(\de^3\int_{ B(b, 1+|b|)\setminus B(b,1-|b|)}\frac{|x-b|^4}{(\de^2+|x-b|^2)^3}dx\bigg)
\\ &= 8\de^3 \int_{B(0, 1-|b|)} \frac{( x_i)^4}{(\de^2+|x|^2)^3}dx +O\bigg(\de^3\int_{ B(0, 1+|b|)\setminus B(0,1-|b|)}\frac{|x|^4}{(\de^2+|x|^2)^3}dx\bigg)\\ &= 
8\de^3\int_{|z|\leq \frac{1-|b|}{\de}} \frac{(z_i)^4}{(1+|z|^2)^3}dz+O(\de^3)
\\ &=6\pi \de^3|\log\de|+O(\de^3)\end{aligned}$$
where we have used the identity $\int_{|z|\leq r} \frac{(z_i)^4}{(1+|z|^2)^3}dz= \frac38\pi\log(1+r^2)+\frac34\frac{\pi}{1+r^2}-\frac{3}{16}\frac{\pi}{(1+r^2)^2}-\frac{9}{16}\pi$.

Similarly, for $i\neq j$
$$\begin{aligned}\into e^{W_{\la}} Z^i_\la(x_j-b_j)^3dx &= 
8\de^3\int_{|z|\leq \frac{1-|b|}{\de}} \frac{z_i(z_j)^3}{(1+|z|^2)^3}dz+O(\de^3) 
 =O(\de^3)\end{aligned}$$
since  $\int_{|z|\leq r} \frac{z_i(z_j)^3}{(1+|z|^2)^3}dz=0$.

Next, for $i\neq j$ 
$$\begin{aligned}\into e^{W_{\la}} Z^i_\la(x_j-b_j)^2( x_i-b_i)dx &=8\de^3\int_{|z|\leq \frac{1-|b|}{\de}} \frac{(z_i)^2(z_j)^2}{(1+|z|^2)^3}dz+O(\de^3)\\ &=2\pi \de^3|\log\de|+O(\de^3)
\end{aligned}$$where the last equality follows by  $\int_{|z|\leq r} \frac{(z_i)^2(z_j)^2}{(1+|z|^2)^3}dz= \frac{\pi}{8}\log(1+r^2)+\frac14\frac{\pi}{1+r^2}-\frac{1}{16}\frac{\pi}{(1+r^2)^2}-\frac{3}{16}\pi$.
Similarly for $i\neq j$ $$\begin{aligned}\into e^{W_{\la}} Z^i_\la(x_i-b_i)^2( x_j-b_j)dx &=8\de^3\int_{|z|\leq \frac{1-b}{\de}} \frac{(z_i)^3z_j}{(1+|z|^2)^3}dz+O(\de^3) =O(\de^3)
\end{aligned}$$by  $\int_{|z|\leq r} \frac{(z_i)^3z_j}{(1+|z|^2)^3}dz=0$.
Finally
$$\begin{aligned}\into e^{W_{\la}}  |Z^i_\la|| x-b|^{\frac72}dx &
  \leq 8\de^2\int_{B(b, 1+|b|)} \frac{\de|x-b|^{\frac92}}{(\de^2+|x- b|^2)^3}dx
 =8\de^{\frac72}\int_{|z|\leq \frac{1+|b|}{\de}} \frac{|z|^{\frac92}}{(1+|z|^2)^3}dz\leq C\de^3.
\end{aligned}$$
Taking into account that $PZ^i_\la=Z^i_\la+O(\de)$ by \eqref{pzi}, and recalling Lemma \ref{aux00app},  the above integral estimates give the thesis. 
\end{proof}

In order to derive next integral estimate we need to expand  the projections $PZ_\la^i$ to a higher order with respect to \eqref{pzi}.

\begin{lemma}\label{pzirafff} For $i=1,2$ the following holds:
$$PZ_\la^i=
Z_\la^i (x)-\de(x_i-b_i)+O(\de^3)+O(\de| b|)\hbox{ in } B_1$$
uniformly for $b$ in a small neighborhood of $0$.
\end{lemma}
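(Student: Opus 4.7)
The plan is to exploit the harmonicity of the ``error'' $PZ_\la^i - Z_\la^i$ and compare it on the boundary with the explicit harmonic function $-\de(x_i - b_i)$. By definition of the projection, $PZ_\la^i - Z_\la^i$ is harmonic in $B_1$ and equals $-Z_\la^i$ on $\partial B_1$. The linear function $x \mapsto -\de(x_i - b_i)$ is also harmonic in $B_1$. Hence the difference
$$\Psi^i := PZ_\la^i - Z_\la^i + \de(x_i - b_i)$$
is harmonic in $B_1$, and by the maximum principle it suffices to estimate $\Psi^i$ on $\partial B_1$.

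First I would expand $Z_\la^i$ on $\partial B_1$. Since $b$ is in a small neighborhood of $0$, the denominator $|x-b|^2$ is uniformly bounded away from $0$ for $x \in \partial B_1$, so
$$Z_\la^i(x) = \frac{\de(x_i - b_i)}{\de^2 + |x-b|^2} = \frac{\de(x_i - b_i)}{|x-b|^2} \cdot \frac{1}{1 + \de^2/|x-b|^2} = \frac{\de(x_i - b_i)}{|x-b|^2} + O(\de^3)$$
uniformly for $x \in \partial B_1$. Thus on $\partial B_1$,
$$\Psi^i(x) = -Z_\la^i(x) + \de(x_i - b_i) = \de(x_i - b_i)\left(1 - \frac{1}{|x-b|^2}\right) + O(\de^3).$$

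Next I would use that on $\partial B_1$ we have $|x|=1$, hence $|x-b|^2 = 1 - 2 b \cdot x + |b|^2 = 1 + O(|b|)$, which gives
$$1 - \frac{1}{|x-b|^2} = \frac{2 b \cdot x - |b|^2}{|x-b|^2} = O(|b|)$$
uniformly on $\partial B_1$. Since also $|x_i - b_i| = O(1)$ on $\partial B_1$, we conclude
$$\Psi^i(x) = O(\de |b|) + O(\de^3) \quad \text{uniformly for } x \in \partial B_1.$$

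Finally, since $\Psi^i$ is harmonic in $B_1$, the maximum principle propagates this uniform boundary estimate to all of $B_1$, yielding
$$PZ_\la^i(x) = Z_\la^i(x) - \de(x_i - b_i) + O(\de^3) + O(\de |b|)$$
uniformly in $\overline{B_1}$. There is no real obstacle here: the entire argument is a one-line application of the maximum principle, and the only place where one must be slightly careful is in keeping track of the two independent small parameters $\de$ and $|b|$ when Taylor-expanding $|x-b|^{-2}$ on the boundary sphere.
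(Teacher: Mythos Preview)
Your proof is correct and follows essentially the same approach as the paper: both arguments show that $PZ_\la^i - Z_\la^i + \de(x_i-b_i)$ is harmonic, expand $Z_\la^i$ on $\partial B_1$ using $|x-b|^2 = 1 + O(|b|)$ and $\de^2/|x-b|^2 = O(\de^2)$ to get a boundary bound of $O(\de|b|) + O(\de^3)$, and then invoke the maximum principle. The only cosmetic difference is that the paper combines the two expansions into a single step $\de^2 + |x-b|^2 = 1 + \de^2 + O(|b|)$, whereas you split them, but the content is identical.
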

\begin{proof}
Let us consider $i=1$. Observe that 
$$\begin{aligned}\hbox{if } |x|=1: \;\;Z_\la^1(x)&=\frac{\de(x_1-b_1)}{\de^2+|x-b|^2}=\frac{\de(x_1-b_1)}{1+\de^2+O(|b|)
}\\ &=\de(x_1-b_1)\Big(1+O(|b|)+O(\de^2) \Big)
\\ &=\de(x_1-b_1)+
O(|b|\de)+O(\de^3) .\end{aligned}$$
Therefore, if we set  $$ \hat{Z}_\la^1:= Z_\la^1 (x)-\de(x_1-b_1),$$ we get $$\hat Z_\la^1(x)=O(\de^3)+O(\de| b|) \hbox{ if }|x|=1$$ and 
$$-\Delta \hat Z_\la^1 (x)=-\Delta Z_\la^1(x)=\Delta PZ_\la^1(x) \hbox{ in }B_1.$$
Hence, since by construction $PZ_\la^1=0$ for $|x|=1$, the maximum principle applies and gives 
$$PZ_\la^1= \hat Z_\la^1 +O(\de^3)+O(\de| b|) =Z_\la^1 (x)-\de(x_1-b_1)+O(\de^3)+O(\de |b|)\hbox{ in } B_1.$$
\end{proof}

\begin{lemma}\label{robin22} Let $P$ be a homogeneous polynomial of degree 2. Then the following holds:$$\into e^{W_{\la}} PZ^i_\la P(x-b)dx=O(\de^3)+O(\de^2 |b|)\quad i=1,2$$
uniformly for $b$ in a small neighborhood of $0$. 

\end{lemma}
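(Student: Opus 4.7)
The plan is to combine the refined expansion of $PZ^i_\la$ provided by Lemma \ref{pzirafff} with the explicit forms of $Z^i_\la$ and $e^{W_\la}$, reducing the computation to integrals that can be handled by symmetry after the rescaling $z=(x-b)/\de$. By linearity in $P$, it suffices to treat each monomial $(x_j-b_j)(x_k-b_k)$ with $j,k\in\{1,2\}$ separately. Substituting
$$PZ^i_\la=Z^i_\la-\de(x_i-b_i)+O(\de^3)+O(\de|b|),$$
the integral splits into three pieces: a main term $\into e^{W_\la}Z^i_\la(x_j-b_j)(x_k-b_k)\,dx$, a correction term $-\de\into e^{W_\la}(x_i-b_i)(x_j-b_j)(x_k-b_k)\,dx$, and a remainder bounded by $\bigl(O(\de^3)+O(\de|b|)\bigr)\into e^{W_\la}|x-b|^2\,dx$. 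The last piece, by Lemma \ref{aux00app}, is $O(\de^5|\log\de|)+O(\de^3|b||\log\de|)$, which is absorbed into $O(\de^3)+O(\de^2|b|)$ for small $\de$.

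For the main term, the identity $e^{W_\la}Z^i_\la=8\de^3(x_i-b_i)(\de^2+|x-b|^2)^{-3}$ followed by the rescaling $z=(x-b)/\de$ turns the integral into $8\de^2\int_{(B_1-b)/\de}\frac{z_iz_jz_k}{(1+|z|^2)^3}\,dz$. Since this kernel decays like $|z|^{-3}$ it is integrable over $\R^2$, and the full $\R^2$ integral vanishes by oddness in one of $z_1,z_2$ (any cubic monomial in two variables is odd in at least one of them). For $b$ small the complementary region $\R^2\setminus(B_1-b)/\de$ lies inside $\{|z|>1/(2\de)\}$, where a direct tail estimate yields $O(\de)$; hence the main term is $O(\de^3)$.

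The correction term, after the same rescaling, equals $-8\de^3\int_{(B_1-b)/\de}\frac{z_iz_jz_k}{(1+|z|^2)^2}\,dz$. Now the kernel decays only like $|z|^{-1}$, so it is not absolutely integrable over $\R^2$, and the device used for the main term is unavailable. The key observation is that $\int_{B(0,R)}\frac{z_iz_jz_k}{(1+|z|^2)^2}\,dz=0$ for every $R>0$ by angular integration, because each $\int_0^{2\pi}\cos^m\theta\sin^n\theta\,d\theta$ with $m+n=3$ vanishes. The actual integration region $(B_1-b)/\de=B(-b/\de,1/\de)$ differs from $B(0,1/\de)$ only by a symmetric difference of area $O(|b|/\de^2)$, on which the integrand is of size $O(\de)$; this produces an error of $O(|b|/\de)$ for the $z$-integral, hence $O(\de^2|b|)$ for the correction.

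The main obstacle is precisely the correction term: because its rescaled kernel is only conditionally convergent at infinity, a naive absolute-value estimate would yield the too weak bound $O(\de^3|\log\de|)$, which cannot be absorbed into $O(\de^3)+O(\de^2|b|)$. Obtaining the sharp bound requires combining the exact angular cancellation over a ball centered at the origin with a careful measure estimate of the symmetric difference between the balls $B(0,1/\de)$ and $B(-b/\de,1/\de)$.
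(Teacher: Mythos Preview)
Your decomposition via Lemma \ref{pzirafff} and your handling of the main term and of the remainder coincide with the paper's proof. The difference is in the correction term $-\de\into e^{W_\la}(x_i-b_i)P(x-b)\,dx$, where your rescaling contains an arithmetic slip: after $x=b+\de z$ one picks up $\de^3$ from the cubic numerator, $\de^{-4}$ from the squared denominator, $\de^2$ from $dx$, and together with the prefactor $-\de\cdot 8\de^2$ this yields $-8\de^4\int\frac{z_iz_jz_k}{(1+|z|^2)^2}\,dz$, not $-8\de^3$. With the correct power the naive absolute-value bound already gives
\[
8\de^4\int_{|z|\le C/\de}\frac{|z|^3}{(1+|z|^2)^2}\,dz=O\!\Big(\de^4\cdot\frac{1}{\de}\Big)=O(\de^3),
\]
so the ``main obstacle'' you describe is a phantom produced by the miscount. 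The paper does exactly this, only without rescaling: it bounds $|(x_i-b_i)P(x-b)|\le C|x-b|^3$ and uses the third estimate of Lemma \ref{aux00app}, $\into e^{W_\la}|x-b|^3\,dx=O(\de^2)$, to conclude in one line that the correction is $O(\de^3)$. Your angular-cancellation plus symmetric-difference argument is valid and, once the power is corrected, would give the even stronger $O(\de^3|b|)$; but it is unnecessary here.
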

\begin{proof} We compute
\beq\label{mes}\begin{aligned}\into e^{W_{\la}} Z^i_\la P(x-b) dx&= 8\de^2\int_{|z|\leq \frac{1 }{\de}}\frac{P(z-\de^{-1}b)}{(1+|z-\de^{-1}b|^2)^3}(z_i-\de^{-1}b_i) dz\\ &= 8\de^2\intr \frac{P(z)}{(1+|z|^2)^3}z_idz+O(\de^3)=O(\de^3)\end{aligned}\eeq where we have used that  $\intr \frac{P(z)}{(1+|z|^2)^3}z_idz=0$    by oddness. 
Taking into account of Lemma \ref{pzirafff} 
we get 
$$\begin{aligned}\into e^{W_{\la}} PZ^i_\la P(x-b) dx&= \into e^{W_{\la}} Z^i_\la P(x-b) dx-\de \into e^{W_{\la}}  (x_i-b_i) P(x-b) dx\\ &\;\;\;\;+ \Big(O(\de^3)+O(\de |b|)\Big) \into e^{W_{\la}} |x-b|^2 dx
\\ &= \into e^{W_{\la}} Z^i_\la P(x-b) dx+O(\de) \into e^{W_{\la}}  |x-b|^3 dx\\ &\;\;\;\;+ \Big(O(\de^3)+O(\de |b|)\Big) \into e^{W_{\la}} |x-b|^2 dx\end{aligned}$$
and the thesis follows by \eqref{mes}, and recalling Lemma \ref{aux00app}. 
\end{proof}

Finally we deduce some integral identities associated to the change of variable $x\mapsto x^\al$ which appears frequently when dealing with $\al$-symmetric functions. \begin{lemma}\label{copy} Let $\al\in\N$, $\al\geq2$, and let $f\in L^1(B_1)$. 
Then we have that $|x|^{2(\al-1)}f(x^\al)\in L^1(B_1) $ and $$\int_{B_1} |x|^{2(\alpha-1)} f(x^\al) dx=\frac1\al\int_{B_1} f(y)dy.$$
\end{lemma}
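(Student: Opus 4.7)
\textbf{Proof proposal for Lemma \ref{copy}.}
The plan is a direct change-of-variables argument, using that $x\mapsto x^\al$ is an $\al$-to-one holomorphic map from $B_1$ onto $B_1$. First I would partition the unit ball (up to a set of measure zero) into the $\al$ angular sectors
$$S_k:=\Big\{r e^{{\rm i}\theta}\ :\ 0\le r<1,\ \tfrac{2\pi(k-1)}{\al}\le \theta<\tfrac{2\pi k}{\al}\Big\},\qquad k=1,\ldots,\al,$$
so that on the interior of each $S_k$ the restriction of the map $\Phi(x):=x^\al$ is a $C^\infty$ diffeomorphism onto $B_1\setminus\{0\}$.

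Next I would compute the Jacobian of $\Phi$. Writing $\Phi(x_1,x_2)=(u(x_1,x_2),v(x_1,x_2))$ with $u+{\rm i}v=(x_1+{\rm i}x_2)^\al$, the Cauchy–Riemann equations give $u_{x_1}=v_{x_2}$, $u_{x_2}=-v_{x_1}$, hence
$$\det D\Phi(x)=u_{x_1}^2+v_{x_1}^2=|\Phi'(x)|^2=\al^2|x|^{2(\al-1)}.$$
Applied to $|f|$, the standard change-of-variables formula on each sector yields
$$\int_{S_k}|x|^{2(\al-1)}|f(x^\al)|\,dx=\frac{1}{\al^2}\int_{B_1}|f(y)|\,dy<\infty,$$
which, after summing over $k=1,\ldots,\al$, gives integrability of $|x|^{2(\al-1)}f(x^\al)$ on $B_1$.

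Finally, summing the same identity applied to $f$ itself over the $\al$ sectors gives
$$\int_{B_1}|x|^{2(\al-1)}f(x^\al)\,dx=\sum_{k=1}^{\al}\int_{S_k}|x|^{2(\al-1)}f(x^\al)\,dx=\al\cdot\frac{1}{\al^2}\int_{B_1}f(y)\,dy=\frac{1}{\al}\int_{B_1}f(y)\,dy,$$
as claimed. There is no real obstacle here: the only mild point to check is that the boundaries of the sectors $S_k$ and the origin are negligible, so that the sectorwise change of variables combines cleanly into a global identity. This follows because $\partial S_k$ has two-dimensional Lebesgue measure zero and $\Phi$ is Lipschitz (so images of null sets are null).
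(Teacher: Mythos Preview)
Your argument is correct and is essentially the paper's own proof: the paper simply carries out the same change of variables $y=x^{\al}$ in polar coordinates, writing $(\rho',\theta')=(\rho^{\al},\al\theta)$ after first reducing to smooth $f$, whereas you phrase it via the $\al$-sector decomposition and compute the Jacobian through the Cauchy--Riemann equations as $|\Phi'(x)|^{2}=\al^{2}|x|^{2(\al-1)}$. (One harmless imprecision: the image of the \emph{interior} of each $S_k$ under $\Phi$ is $B_1$ minus a radial segment, not $B_1\setminus\{0\}$; since this still has full measure in $B_1$, your computation is unaffected.)
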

\begin{proof} It is sufficient to prove the thesis for a smooth function $f$. Using  the polar coordinates $(\rho,\theta)$ and then applying the change of variables $(\rho',\theta')=(\rho^\al,\al\theta)$
$$\begin{aligned} \int_{B_1} |x|^{2(\alpha-1)} f(x^\al) dx&=\int_0^{+\infty} d\rho\int_0^{2\pi} \rho^{2\al-1}f(\rho^\al e^{{\rm i}\al \theta}) d\theta
\\ &= \frac{1}{\al^2}\int_0^{+\infty} d\rho'\int_0^{2\al \pi}  \rho'f(\rho' e^{{\rm i}\theta'}) d\theta' \\ &=\frac{1}{\al}\int_0^{+\infty} d\rho'\int_0^{2\pi}  \rho'|f(\rho' e^{{\rm i}\theta'})|^2 d\theta'
\\& =\frac{1}{\al}\into f(y) dy.\end{aligned}$$

\end{proof}

\end{document}